\documentclass[10pt]{amsart}
\usepackage{amsmath}
\usepackage{amscd}
\usepackage[all]{xy}

\newcommand{\Pic}{\operatorname{Pic}}
\newcommand{\Cl}{\operatorname{Cl}}
\newcommand{\Div}{\operatorname{Div}}
\renewcommand{\div}{\operatorname{div}}
\newcommand{\red}{{\operatorname{red}}}
\newcommand{\nor}{{\operatorname{nor}}}
\newcommand{\sn}{{\operatorname{sn}}}

\newcommand{\Hom}{\operatorname{Hom}}   

\newcommand{\Cart}{\operatorname{Cart}}

\newcommand{\MSpec}{\operatorname{MSpec}}
\newcommand{\MProj}{\operatorname{MProj}}
\newcommand{\Ann}{\mathrm{ann}}
\newcommand{\Ass}{\mathrm{Ass}}
\newcommand{\nil}{\mathrm{nil}}

\def\lra{\longrightarrow}
\def\map#1{{\buildrel #1 \over \lra}}

\renewcommand{\P}{\mathbb{P}}

\newcommand{\bA}{\mathbb{A}}

\newcommand{\Z}{\mathbb{Z}}
\newcommand{\frakm}{\mathfrak{m}}
\newcommand{\frakp}{\mathfrak{p}}
\newcommand{\frakq}{\mathfrak{q}}
\newcommand{\cA}{\mathcal{A}}
\newcommand{\cF}{\mathcal{F}}
\newcommand{\cH}{\mathcal{H}}

\newcommand{\cL}{\mathcal{L}}

\numberwithin{equation}{section}

\theoremstyle{plain}
\newtheorem{thm}[equation]{Theorem}
\newtheorem{prop}[equation]{Proposition}
\newtheorem{lem}[equation]{Lemma}
\newtheorem{cor}[equation]{Corollary}

\theoremstyle{definition}
\newtheorem{defn}[equation]{Definition}
\newtheorem{exam}[equation]{Example}
\newtheorem{rem}[equation]{Remark}

\newtheorem{substuff}{Remark}[equation]
\newtheorem{subex}[substuff]{Example}
\newtheorem{subrem}[substuff]{Remark}

\setcounter{tocdepth}{1}

\begin{document}
 
\title[\ ] 
{Picard groups and Class groups of monoid schemes}

\author{J. Flores and C. Weibel}
\address{Department of Mathematics, Rutgers University, 
New Brunswick, NJ U.S.A.}
\email{jarflores@gmail.com, weibel@math.rutgers.edu}
\date{\today}
\bigskip

\begin{abstract} 
We study the Picard group of a monoid scheme and the 
class group of a normal monoid scheme. To do so, we develop
some ideal theory for (pointed abelian) noetherian monoids, including
primary decomposition and discrete valuations. The normalization of 
a monoid turns out to be a monoid scheme, but not always a monoid.
\end{abstract}
\maketitle

\section*{Introduction}

The main purpose of this paper is to study the Picard group of a monoid
scheme $X$ and, if $X$ is normal, the Weil divisor class group of $X$.
Along the way, we develop the analogues for pointed abelian monoids
of several notions in commutative ring theory such as 
associated prime ideals, discrete valuations and normalization.

Recall from \cite{chww} or \cite{Vezz} that a {\it monoid scheme} is a 
topological space $X$ equipped with a sheaf $\cA$ of pointed abelian monoids
such that locally $X$ is $\MSpec(A)$, the set of prime ideals of a
pointed monoid $A$ with its natural structure sheaf. The interest
in monoid schemes stems from the fact that every toric variety 
(or rather its fan $\Delta$) is associated to a monoid scheme 
$X_\Delta$ in a natural way; see \cite[4.2]{chww}. 

To study the divisor class group of a normal monoid, we need to
establish the connection between height one primes and discrete 
valuations of a normal monoid. To this end, the first few sections
develop the theory of associated primes for pointed abelian 
noetherian monoids, 
in analogy with the theory for commutative rings \cite{AM}.

The notion of normalization behaves differently for monoids than
it does in ring theory.  The normalization of a cancellative monoid
is well known \cite{Gilmer}, but the surprise is that we can make sense 
of normalization for partially cancellative monoids,
i.e., for arbitrary quotients of cancellative monoids. The normalization
of $A$ turns out to be not a monoid but a {\it monoid scheme}: the disjoint
union over the minimal primes of the normalizations of the $A/\frakp$.
That is, we need to embed monoids (contravariantly) into the 
larger category of monoid schemes.

With these preliminaries, we define the Weil divisor class group of
a normal monoid or monoid scheme in the same way as in algebraic geometry,
and give its basic properties in Section \ref{sec:Weil}.
It turns out that the class group of a toric monoid scheme agrees with
the divisor class group of the associated toric variety.

The {\it Picard group} $\Pic(X)$ of a monoid scheme $X$ is the set
of isomorphism classes of invertible sheaves on $X$ 
(see Definition \ref{def:Pic}, \cite{CLS} or \cite{GHS}).
In more detail, recall that if $A$ is a pointed monoid then an {\it$A$-set}
is a pointed set $L$ on which $A$ acts; the smash product
$L_1\wedge_A L_2$ is again an $A$-set.
Similarly, if $(X,\cA)$ is a monoid scheme then a sheaf of $\cA$-sets
is a sheaf of pointed sets $\cL$, equipped with a pairing 
$\cA\wedge\cL\to\cL$ making each stalk $\cL_x$ an $\cA_x$-set.
We say that $\cL$ is {\it invertible} if it is locally isomorphic to 
$\cA$ in the Zariski topology; the smash product of invertible sheaves
is again invertible, and the dual sheaf $\cL^{-1}$ satisfies
$\cL\wedge_A\cL^{-1}\cong\cA$. This gives $\Pic(X)$ the structure
of an abelian group; see Section \ref{sec:Pic}.

The group of $A$-set automorphisms of any monoid $A$ is
canonically isomorphic to $A^\times$. Thus a standard argument 
(see Lemma \ref{Pic=H1}) shows that $\Pic(X) \cong H^1(X,\cA^\times)$.
Using this, we show in \ref{Pic.HI} and \ref{Pic.Xsn} that
(unlike algebraic geometry) 
we always have $\Pic(X)=\Pic(X\times\bA^1)$ and $\Pic(X)=\Pic(X_\sn)$,
where $X_\sn$ is the seminormalization of $X$.

In Section \ref{sec:Cartier} we show that $\Pic(X)$ is always a subgroup
of $\Cl(X)$ when $X$ is normal. It turns out that the Picard group of 
a toric monoid scheme agrees with the Picard group of the associated 
toric variety; see Theorem \ref{thm:toric}. This explains the
calculation by Vezzani (our original inspiration) that the Picard group 
of the projective monoid scheme $\P^n$ is $\Z$. We conclude this paper
by providing some exact sequences relating $\Pic(X)$ to $\Pic(X_\nor)$.


After this paper was written, we were informed by Bernhard K\"ock
that many of the results in the first two sections were obtained
by Ulrich Kobsa in \cite{Kobsa}, and by Franz Halter-Koch in \cite{H-K}.
Since these sources are couched in a different framework, we have
elected to retain our sections in order to be self-contained.

\medskip
\section*{\it Notation}
In this paper the term `monoid' will always mean a pointed, abelian
monoid unless otherwise stated. We write the product multiplicatively,
so that the zero element $0$ is the basepoint and $1$ is the unit. We also
write $A^\times$ for the group of invertible elements in $A$.


We remind the reader of some basic facts concerning such monoids.  Let
$A$ be a monoid.  An ideal $I\subseteq A$ is a subset of elements such
that $AI\subseteq I$; a nonzero ideal of the form $Ax$ is called a 
{\it principal ideal}.  The quotient or factor monoid
$A/I$ identifies all elements of the ideal $I$ with 0.  If $B$ 
is an unpointed monoid, we can form the (pointed) monoid $B_+$ by adding
a disjoint zero element.

An ideal $\frakp\subseteq A$ is {\it prime} when $xy\in\frakp$ means
$x\in\frakp$ or $y\in\frakp$.  The set of all prime ideals form a
topological space, denoted $\MSpec(A)$, under the Zariski Topology. 
If the elements of $A$ satisfy $ab=ac$ implies $b=c$,
we say that $A$ is {\it cancellative}. If $A$ is the quotient of a
cancellative monoid by an ideal, we say it is {\it partially cancellative},
or {\it pc}.

Let $S\subseteq A$ be a multiplicatively closed subset.  We can then
form the monoid $S^{-1}A$, termed the localization of $A$ at $S$,
which is obtained from $A$ by inverting all elements of $S$.  When
$S=A\backslash\frakp$, we say that $S^{-1}A$ is obtained from $A$ by
localizing at $\frakp$ and denote this special case by $A_{\frakp}$.
Note that $\frakp$ is the maximal ideal of $A_{\frakp}$.  

In the case where $\frakp=\{0\}$ is a prime ideal, we write $A_0$ 
for $A_\frakp$; it is an abelian group with a disjoint basepoint,
called the group completion of $A$ and there is an inclusion 
$A\hookrightarrow A_0$ precisely when $A$ is cancellative.
We shall write $A_0^\times$ for the abelian group $A_0\setminus\{0\}$;
it is the classical group completion of the unpointed monoid 
$A\setminus\{0\}$.

\medskip
\paragraph{\it Acknowledgements}
The authors are grateful to Alberto Vezzani and Luca Barbieri
for asking about the structure of $\Pic(X)$, and to Bernhard K\"ock
for pointing out the references \cite{Kobsa} and \cite{H-K}.

\newpage
\section{Primary decomposition}

The definitions and proofs in this section are direct translations
from commutative ring theory and are given here for convenience.

We say that a monoid $A$ is {\it noetherian} when it satisfies
the ascending chain condition on ideals.  This is equivalent to
the condition that every ideal is finitely generated, by the usual
proof: if $I_1\subseteq I_2\subseteq\cdots$ is an ascending chain 
of finitely generated ideals of $A$, then the union is finitely 
generated if and only if it equals some $I_n$.

%
%

\begin{rem}
When $A^{\times}$ is a finitely generated abelian group, a monoid $A$ 
is noetherian if and only if $A$ satisfies the ACC on congruences (which need not
be ideals), i.e.,
the descending chain condition on \emph{all} quotient monoids. We will not
consider this (slightly) stronger chain condition.
\end{rem}

In order to establish a primary decomposition for ideals, 
we introduce some definitions.
A proper ideal $\frakq\subseteq A$ is {\it primary} when $xy\in\frakq$
implies $x\in\frakq$ or $y^n\in\frakq$.  Alternatively, $\frakq$
is primary when every zero-divisor $a\in A/\frakq$ is nilpotent.
The \emph{radical} of an ideal $I$ is $\sqrt{I}=\{a\in A\ |\ a^n\in I\}$;
it is a prime ideal when $I$ is primary.
An ideal $I\subseteq A$ is said to be {\it irreducible} when $I=J\cap K$, 
with $J,K\subseteq A$ ideals, implies $I=J$ or $I=K$.

\begin{lem}\label{irred.prim}
Every irreducible ideal of a noetherian monoid is primary.
\end{lem}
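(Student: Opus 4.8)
The plan is to imitate the standard ring-theoretic argument (the proof of the analogous statement in \cite{AM}), adapting each step to pointed monoids. Throughout I take $I$ to be proper, since the notion of primary ideal is defined only for proper ideals.

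First I would reduce to the case $I=\{0\}$. Passing to the quotient monoid $A/I$, the ideals of $A/I$ correspond bijectively to the ideals of $A$ containing $I$, and this correspondence respects intersections; moreover $A/I$ is again noetherian, since any ascending chain of ideals of $A/I$ lifts to one in $A$. Under this dictionary $I$ is irreducible in $A$ exactly when $\{0\}$ is irreducible in $A/I$, and $I$ is primary in $A$ exactly when $\{0\}$ is primary in $A/I$, i.e.\ when every zero-divisor of $A/I$ is nilpotent. So it suffices to show: if $\{0\}$ is irreducible in a noetherian monoid $A$, then every zero-divisor of $A$ is nilpotent.

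Suppose then that $\{0\}$ is irreducible and let $y$ be a zero-divisor, say $xy=0$ with $x\neq 0$; I may assume $y\neq 0$, as otherwise $y$ is trivially nilpotent. For each $k$ the annihilator $\Ann(y^k)=\{a\in A : ay^k=0\}$ is an ideal, and these form an ascending chain $\Ann(y)\subseteq\Ann(y^2)\subseteq\cdots$, which stabilizes by the noetherian hypothesis: $\Ann(y^n)=\Ann(y^{n+1})$ for some $n\geq 1$. The key step is then to verify that $Ay^n\cap Ax=\{0\}$. Indeed, if $a=by^n=cx$ lies in the intersection, then $ay=c(xy)=0$, so $by^{n+1}=0$; hence $b\in\Ann(y^{n+1})=\Ann(y^n)$, giving $a=by^n=0$.

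Finally, since $\{0\}=Ay^n\cap Ax$ and $\{0\}$ is irreducible, one of the two factors must equal $\{0\}$. As $x\neq 0$ we have $x\in Ax$, so $Ax\neq\{0\}$; therefore $Ay^n=\{0\}$, whence $y^n=0$ and $y$ is nilpotent, as required. The only real obstacle is the intersection identity $Ay^n\cap Ax=\{0\}$, which is where the stabilization of the annihilator chain and the hypothesis $xy=0$ are both used; the rest is bookkeeping, with the minor monoid-specific care that every ideal (including the principal ideal $Ax$) contains the basepoint $0$, so that the degenerate cases $x=0$ and $y=0$ are dispatched separately.
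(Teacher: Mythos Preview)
Your proof is correct and is essentially identical to the paper's: both reduce to $I=\{0\}$ in $A/I$, stabilize an annihilator chain, prove the two principal ideals have trivial intersection, and invoke irreducibility. The only difference is cosmetic---you label the would-be nilpotent element $y$ and its nonzero annihilator $x$, while the paper swaps these roles.
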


\begin{proof}
An ideal $I\subseteq A$ is primary if and only if the zero
ideal of $A/I$ is primary.  Therefore we need only show when $(0)$ is
irreducible, it is primary.  Let $xy=0$ with $y\neq 0$; we will
show $x^n=0$.  Consider the ascending chain $\Ann(x)\subseteq
\Ann(x^2)\subseteq\cdots$, where $\Ann(x)=\{a\in A\ | \ ax=0\}$.
By the noetherian property, this chain
must stabilize, say $\Ann(x^n)=\Ann(x^{n+1})=\cdots$ for some $n>0$.

We claim that $0=(x^n)\cap (y)$.  Let $a\in(x^n)\cap(y)$,
say $a=bx^n=cy$. Then $0=c(xy)=ax=(bx^n)x=bx^{n+1}$.
Hence $b\in\Ann(x^{n+1})=\Ann(x^n)$ giving $a=bx^n=0$.  Since $(0)$ is
irreducible and $y\neq 0$, we must have $x^n=0$ proving $(0)$ is
primary.
\end{proof}

%

\begin{thm}[Primary decomposition]\label{p.decomp}
In a noetherian monoid every ideal $I$ can be written as the finite
intersection of irreducible primary ideals $I=\cap_i\frakq_i$.
\end{thm}

\begin{proof}
Suppose the result is false.  Since $A$ is noetherian, the set of 
ideals which cannot be written as a finite intersection of irreducible 
ideals has a maximal element, say $I$.
Since $I$ is not irreducible, it can be written $I=J\cap K$ where 
$J,K$ are ideals of $A$ containing $I$.  By maximality, 
both $J$ and $K$ (and hence $I$) can be written as a finite intersection of 
irreducible ideals.  This is a contradiction, and the theorem
follows via Lemma \ref{irred.prim}.
\end{proof}

\begin{subrem}\label{nil(A)}
We say that $A$ is {\it reduced} if whenever $a,b\in A$ satisfy 
$a^2=b^2$ and $a^3=b^3$ then $a=b$. This implies that $A$ has no 
nilpotent elements, i.e., that
$\nil(A)=\{ a\in A: a^n=0 \text{ for some } n\}$ vanishes.
By Theorem \ref{p.decomp}, $\nil(A)$
is the intersection of all the prime ideals in $A$; 
cf.\ \cite[(1.1)]{chww-p}.

There is a universal map $A\to A_\red$ from $A$ to reduced monoids;
$A_\red$ is a quotient of $A/\nil(A)$. The universal non-reduced 
example $A_u=\langle a,b:a^n=b^n\textrm{ for }n\ge2\rangle$ shows
that being reduced is stronger than having $\nil(A)=0$. This notion
of reduced avoids pathologies (such as the embedded prime in $\Z[A_u]$),
and is important in \cite{chww-p}.
When $A$ is a pc monoid, $A$ is reduced if and only if $\nil(A)=0$; 
in this case $A_\red=A/\nil(A)$ is a reduced monoid (see \cite[1.6]{chww-p}).
\end{subrem}

Let $A$ be a noetherian monoid and $I\subseteq A$ an ideal.
Given a minimal primary decomposition of $I$, $I=\cap_i\frakq_i$,
$\Ass(I)$ denotes the set of prime ideals occurring as the radicals 
$\frakp_i=\sqrt{\frakq_i}$; the $\frakp_i$ are called the 
associated primes of $I$. Although the primary decomposition need 
not be unique, the set $\Ass(I)$ of 
associated primes of $I$ is independent of the minimal
primary decomposition, by \ref{ap.radquot} below.

In order to show that the associated primes are ideal quotients,
we recall the definition. Given ideals $I,J$ of $A$, the 
{\it ideal quotient} of $I$ by $J$ is the ideal 
$(I:J)=\{a\in A \ | \ ax\in I \text{ for all } x\in J \}$.  
When $I=aA$ is a principal ideal, we often write $(a:J)$ for $(aA:J)$.
The next two lemmas are simple, direct translations from ring
theory (such as \cite[1.11, 4.4]{AM}) so their proofs are omitted.

\begin{lem}\label{pcont.quotprim}
Let $\frakp$ be prime ideal in a monoid $A$.
\begin{enumerate}
\item[i)] If $I_1,\ldots,I_n$ are ideals such that 
$\cap_i I_i\subseteq\frakp$, then $I_i\subseteq\frakp$ for some $i$.  
If in addition $\frakp=\cap_i I_i$, then $\frakp=I_i$ for some $i$.
\item[ii)] Let $\frakq$ be a $\frakp$-primary ideal of $A$.
  If $a\in A\backslash\frakq$, then $(\frakq:a)$ is $\frakp$-primary.
\end{enumerate}
\end{lem}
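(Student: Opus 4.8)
The plan is to treat both parts as direct monoid analogues of the corresponding ring-theoretic statements, the only real care being to exploit commutativity so as to handle the (superficially asymmetric) definition of a primary ideal.

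For part (i), I would argue by a prime-avoidance style contradiction. Suppose $\cap_i I_i\subseteq\frakp$ but $I_i\not\subseteq\frakp$ for every $i$; then choose $x_i\in I_i\setminus\frakp$ for each $i$ and form the product $x=x_1\cdots x_n$. Since each $I_i$ is an ideal containing $x_i$, the product $x$ lies in every $I_i$, hence in $\cap_i I_i\subseteq\frakp$. On the other hand, primeness of $\frakp$ (applied inductively to a product) forces one of the factors $x_i$ into $\frakp$, contradicting the choice of the $x_i$. Thus some $I_i\subseteq\frakp$. For the refinement, if moreover $\frakp=\cap_i I_i$, then $\frakp\subseteq I_i$ for every $i$ automatically, and combining this with the containment $I_i\subseteq\frakp$ just obtained yields $\frakp=I_i$.

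For part (ii), I would first record that $(\frakq:a)$ is a proper ideal: it is an ideal since $(bx)a=b(xa)\in\frakq$ whenever $xa\in\frakq$, and it is proper because $1\in(\frakq:a)$ would give $a\in\frakq$, contrary to hypothesis. The heart of the argument is the computation $\sqrt{(\frakq:a)}=\frakp$. The containment $\frakq\subseteq(\frakq:a)$ is immediate, so $\frakp=\sqrt{\frakq}\subseteq\sqrt{(\frakq:a)}$. For the reverse containment I would show $(\frakq:a)\subseteq\frakp$: if $xa\in\frakq$, then writing this as $ax\in\frakq$ and applying the primary condition with $a\notin\frakq$ forces $x^n\in\frakq$, i.e.\ $x\in\sqrt{\frakq}=\frakp$. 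Taking radicals (and using $\sqrt{\frakp}=\frakp$) then pins down $\sqrt{(\frakq:a)}=\frakp$. Finally, to see that $(\frakq:a)$ is primary I would verify the defining implication directly: if $xy\in(\frakq:a)$ with $x\notin(\frakq:a)$, then $(xa)y\in\frakq$ while $xa\notin\frakq$, so the primary property of $\frakq$ gives $y^n\in\frakq\subseteq(\frakq:a)$, as required.

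The genuinely delicate point in both parts is the asymmetry of the primary definition, $xy\in\frakq\Rightarrow x\in\frakq$ or $y^n\in\frakq$: one must consistently use commutativity to place the element known to lie outside $\frakq$ into the correct slot, so that the conclusion lands on the intended factor. Beyond that bookkeeping, the arguments are routine and mirror \cite[1.11, 4.4]{AM} verbatim.
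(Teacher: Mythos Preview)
Your proof is correct and is precisely the direct translation from \cite[1.11, 4.4]{AM} that the paper alludes to; the paper itself omits the argument for this reason, so your write-up is exactly what a reader filling in the details would produce.
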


\begin{lem}\label{rad.power}
Let $A$ be a noetherian monoid.
For any ideal $I$ of $A$,  $(\sqrt{I})^n\subseteq I$ for some $n$.
\end{lem}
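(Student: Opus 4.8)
The plan is to exploit that $A$ is noetherian so that $\sqrt{I}$ is finitely generated, and then reduce the desired containment to a finite pigeonhole argument on products of generators. First I would observe that $\sqrt{I}$ is genuinely an ideal: if $a^m\in I$ and $b\in A$, then $(ba)^m=b^m a^m\in I$, so $ba\in\sqrt{I}$. Since $A$ is noetherian, every ideal is finitely generated, so I may write $\sqrt{I}=(x_1,\ldots,x_k)=\bigcup_i Ax_i$. Because each $x_i$ lies in $\sqrt{I}$, there is an exponent $m_i$ with $x_i^{m_i}\in I$.

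Next I would recall what the power $(\sqrt{I})^n$ means in the monoid setting: the product of two ideals $J,K$ is the ideal $JK=\{fg : f\in J,\ g\in K\}$, so an element of $(\sqrt{I})^n$ is a product $a_1\cdots a_n$ of $n$ elements of $\sqrt{I}$. This situation is in fact cleaner than the ring case, because there is no addition: each factor $a_j$, lying in $\bigcup_i Ax_i$, is literally of the form $a_j=b_j x_{i_j}$ for a \emph{single} index $i_j$ and some $b_j\in A$. Hence $a_1\cdots a_n=(b_1\cdots b_n)\,x_{i_1}\cdots x_{i_n}$ is a monoid multiple of a product of $n$ generators (with repetition allowed).

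The key quantitative step is the choice of $n$. Setting $n=1+\sum_{i=1}^k (m_i-1)$, any product $x_{i_1}\cdots x_{i_n}$ of $n$ generators must use some single generator $x_i$ at least $m_i$ times, by the pigeonhole principle: otherwise each $x_i$ would appear at most $m_i-1$ times and the total number of factors would be at most $\sum_i (m_i-1)<n$. Therefore $x_i^{m_i}$ divides this product for some $i$, and since $x_i^{m_i}\in I$ and $I$ is an ideal, the whole product $a_1\cdots a_n$ lies in $I$. As this holds for every product of $n$ elements of $\sqrt{I}$, we conclude $(\sqrt{I})^n\subseteq I$.

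I expect the only point requiring care to be the bookkeeping around the notation $(\sqrt{I})^n$ together with the structure of a finitely generated ideal in a monoid, namely that its elements are single multiples $bx_i$ rather than sums of such terms. Once this is pinned down the pigeonhole count is immediate, and there is no genuine obstacle; the statement is a direct and in fact simplified analogue of the corresponding result for noetherian rings.
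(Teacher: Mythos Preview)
Your argument is correct and is precisely the direct translation from the ring-theoretic proof (e.g., Atiyah--Macdonald, Proposition~7.14) that the paper alludes to; indeed the paper omits the proof entirely, remarking only that it is a simple translation from ring theory. Your observation that in the monoid setting each element of a finitely generated ideal is a single multiple $bx_i$ (rather than a sum) is exactly the simplification one expects, and the pigeonhole bound $n=1+\sum(m_i-1)$ is the standard one.
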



\begin{prop}\label{ap.radquot}
Let $I\subseteq A$ be an ideal with minimal primary decomposition
$I=\cap_{i=1}^n\frakq_i$ where $\frakq_i$ is $\frakp_i$-primary.  Then $\Ass(I)$ is 
exactly the set of prime ideals which occur in the set of ideals 
$\sqrt{(I:a)}$, where $a\in A$. Hence $\Ass(I)$ is independent of the
choice of primary decomposition.

In addition, the minimal elements in $\Ass(I)$ are exactly the
set of prime ideals minimal over $I$.
\end{prop}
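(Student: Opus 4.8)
The plan is to follow the classical argument for the first uniqueness theorem of primary decomposition (as in \cite[4.5]{AM}), checking that each step survives the passage to pointed monoids. The central computation is an explicit formula for $\sqrt{(I:a)}$. First I would observe that ideal quotients commute with finite intersections: for any $a\in A$, one has $x\in(I:a)$ iff $xa\in\cap_i\frakq_i$ iff $x\in(\frakq_i:a)$ for every $i$, so that $(I:a)=\cap_i(\frakq_i:a)$. For each index, either $a\in\frakq_i$, in which case $(\frakq_i:a)=A$, or $a\notin\frakq_i$, in which case Lemma \ref{pcont.quotprim}(ii) tells us $(\frakq_i:a)$ is $\frakp_i$-primary and hence has radical $\frakp_i$. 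Combining this with the identity $\sqrt{\cap_k J_k}=\cap_k\sqrt{J_k}$ for finite intersections (immediate by choosing a common exponent) yields $\sqrt{(I:a)}=\bigcap_{a\notin\frakq_i}\frakp_i$.

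Next I would extract the two inclusions. If $\sqrt{(I:a)}$ happens to be prime, then it equals an intersection of some of the $\frakp_i$, so by the second assertion of Lemma \ref{pcont.quotprim}(i) it equals one of them; thus every prime of the form $\sqrt{(I:a)}$ lies in $\{\frakp_1,\dots,\frakp_n\}=\Ass(I)$. For the reverse, I would use the irredundancy of the decomposition: minimality guarantees $\cap_{j\neq i}\frakq_j\not\subseteq\frakq_i$, so I can choose $a_i\in\cap_{j\neq i}\frakq_j$ with $a_i\notin\frakq_i$. For this element $\{j:a_i\notin\frakq_j\}=\{i\}$, and the displayed formula gives $\sqrt{(I:a_i)}=\frakp_i$. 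Hence $\Ass(I)$ is exactly the set of primes among the $\sqrt{(I:a)}$; since that description makes no reference to the chosen decomposition, $\Ass(I)$ is independent of it.

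For the second statement I would use only Lemma \ref{pcont.quotprim}(i). Every $\frakp_i\supseteq\frakq_i\supseteq I$, so all associated primes contain $I$. Conversely, if $\frakp$ is any prime with $I=\cap_i\frakq_i\subseteq\frakp$, then part (i) forces $\frakq_i\subseteq\frakp$ for some $i$, whence $\frakp_i=\sqrt{\frakq_i}\subseteq\frakp$. Thus every prime over $I$ dominates some $\frakp_i$. Feeding this in both directions finishes the claim: a prime minimal over $I$ must coincide with the $\frakp_i$ it dominates, so it is a minimal element of $\Ass(I)$; and if $\frakp_i$ is minimal in $\Ass(I)$ and $I\subseteq\frakp\subseteq\frakp_i$ for a prime $\frakp$, then $\frakp\supseteq\frakp_j$ for some $j$, forcing $\frakp_j\subseteq\frakp_i$ and hence $\frakp_j=\frakp_i=\frakp$ by minimality.

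I do not expect a genuine obstacle here, since no localization is needed; the only points requiring care are the two ideal-theoretic identities above and the correct reading of ``minimal''---first that irredundancy of the decomposition is exactly what produces the witnesses $a_i$, and second that ``minimal in $\Ass(I)$'' and ``minimal over $I$'' must be reconciled through the domination property rather than assumed equivalent a priori.
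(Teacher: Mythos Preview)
Your argument is correct and follows essentially the same route as the paper's proof: both compute $\sqrt{(I:a)}=\bigcap_{a\notin\frakq_i}\frakp_i$ via Lemma~\ref{pcont.quotprim}(ii), invoke Lemma~\ref{pcont.quotprim}(i) to extract a single $\frakp_i$ when the radical is prime, and use irredundancy to produce the witnesses $a_i$. Your treatment of the minimal-prime statement is slightly more thorough than the paper's (you verify both directions explicitly), but the method is the same.
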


\begin{proof} (Compare \cite[4.5, 4.6]{AM}.)
First note that 
\[
\sqrt{(I:a)} = \sqrt{(\cap_i\frakq_i:a)} 
= \sqrt{\cap_i(\frakq_i:a)} = \cap_i \sqrt{(\frakq_i:a)}.
\]
Since $\frakq_i$ is $\frakp_i$-primary, this equals 
$\cap_{a\not\in\frakq_i}\frakp_i$, by Lemma~\ref{pcont.quotprim}(ii).
If $\sqrt{(I:a)}$ is prime, then it is $\frakp_i$ for some $i$, by
Lemma~\ref{pcont.quotprim}(i).  Conversely, by minimality of the primary
decomposition, for each $i$ there exists an $a_i\not\in\frakq_i$ but
$a_i\in\cap_{j\neq i}\frakq_j$.  Using Lemma~\ref{pcont.quotprim}(i)
once more, we see $\sqrt{(I:a_i)}=\frakp_i$.

Finally, if $I\subseteq\frakp$ then $\cap\frakp_i\subseteq\frakp$,
so $\frakp$ contains some $\frakp_i$ by Lemma \ref{pcont.quotprim}(i).
If $\frakp$ is minimal over $I$ then necessarily $\frakp=\frakp_i$.
\end{proof}

\begin{prop}\label{ap.ann}
Let $A$ be a noetherian monoid and $I\subseteq A$ an ideal.
Then the associated prime ideals of $I$
are exactly the prime ideals occurring in the set of ideals
$(I:a)$ where $a\in A$.
\end{prop}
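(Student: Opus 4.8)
The plan is to prove the two inclusions separately, reducing everything to the preceding Proposition \ref{ap.radquot}, which already identifies $\Ass(I)$ with the set of primes occurring among the radicals $\sqrt{(I:a)}$.

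One inclusion will be immediate. If $(I:a)$ happens to be a prime ideal $\frakp$, then it equals its own radical, since a prime ideal satisfies $\sqrt{\frakp}=\frakp$ (from $x^n\in\frakp$ one gets $x\in\frakp$). Thus $\frakp=\sqrt{(I:a)}$ is a prime occurring as the radical of an ideal quotient, so $\frakp\in\Ass(I)$ by Proposition \ref{ap.radquot}.

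For the reverse inclusion I will show that every $\frakp_i\in\Ass(I)$ is realized \emph{exactly} as some $(I:a)$, not merely as a radical. Fixing a minimal primary decomposition $I=\cap_j\frakq_j$ with $\frakq_j$ being $\frakp_j$-primary, I will first reduce to a single primary component: by minimality there is an element $a_i\in\cap_{j\neq i}\frakq_j$ with $a_i\notin\frakq_i$, the same choice used in the proof of \ref{ap.radquot}. Using $(\cap_j\frakq_j:a_i)=\cap_j(\frakq_j:a_i)$ together with the fact that $(\frakq_j:a_i)=A$ whenever $a_i\in\frakq_j$, this yields $(I:a_i)=(\frakq_i:a_i)$, which is $\frakp_i$-primary by Lemma \ref{pcont.quotprim}(ii).

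It will then remain to pass from a $\frakp$-primary ideal $\frakq$ to $\frakp$ itself by one further ideal quotient, and I expect this squeezing step to be the only delicate point. By Lemma \ref{rad.power} there is a least $n\geq1$ with $\frakp^n=(\sqrt{\frakq})^n\subseteq\frakq$; I will choose $c\in\frakp^{n-1}$ with $c\notin\frakq$ (taking $c=1$ when $n=1$, which is legitimate as $\frakq$ is proper, and invoking minimality of $n$ when $n\geq2$). On the one hand $\frakp c\subseteq\frakp^n\subseteq\frakq$ gives $\frakp\subseteq(\frakq:c)$; on the other hand, since $c\notin\frakq$, Lemma \ref{pcont.quotprim}(ii) makes $(\frakq:c)$ a $\frakp$-primary ideal, whence $(\frakq:c)\subseteq\sqrt{(\frakq:c)}=\frakp$. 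Therefore $(\frakq:c)=\frakp$. Applying this with $\frakq=(I:a_i)$ and using the identity $((I:a_i):c)=(I:a_ic)$, the element $a=a_ic$ will satisfy $(I:a)=\frakp_i$, completing the proof. Everything apart from this last step is a direct transcription of the ring-theoretic argument (cf.\ \cite[4.7]{AM}).
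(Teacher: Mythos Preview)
Your argument is correct and follows essentially the same route as the paper: the easy direction via Proposition~\ref{ap.radquot}, then Lemma~\ref{pcont.quotprim}(ii) to get a $\frakp_i$-primary ideal quotient, Lemma~\ref{rad.power} for the power of $\frakp_i$, and the squeeze $\frakp_i\subseteq(I:a)\subseteq\sqrt{(I:a)}=\frakp_i$. The only cosmetic difference is that the paper selects the witness in one stroke as an element of $\frakp_i^{n-1}I_i\setminus I$ (with $I_i=\cap_{j\ne i}\frakq_j$ and $n$ minimal for $\frakp_i^nI_i\subseteq I$), whereas you build it as a product $a_ic$ and apply Lemma~\ref{rad.power} to $(I:a_i)$ rather than to $\frakq_i$; the identity $((I:a_i):c)=(I:a_ic)$ makes these equivalent.
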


\begin{proof}
The ideals $I_i=\cap_{j\neq i}\frakq_j$ strictly contain $I$ by
minimality of the decomposition.  Since $\frakq_i\cap I_i=I$, any
$a\in I_i\backslash I$ is not contained in $\frakq_i$, hence
$(I:a)$ is $\frakp_i$-primary by Lemma \ref{pcont.quotprim}(ii).
Now, by Lemma~\ref{rad.power} we have
$\frakp_i^n\subseteq\frakq_i$ for some $n>0$, hence
\[
\frakp_i^nI_i\subseteq \frakq_iI_i\subseteq \frakq_i\cap I_i=I.
\]
Choose $n$ minimal so that $\frakp_i^nI_i\subseteq I$ (hence in $I_i$)
and pick $a\in\frakp_i^{n-1}I_i$ with $a\not\in I$. Since 
$\frakp_ia\subseteq I$ we have $\frakp_i\subseteq (I:a)$; 
as $(I:a)$ is $\frakp_i$-primary, we have $\frakp_i=(I:a)$.  
Conversely, if $(I:a)$ is prime then it is an
associated prime by Proposition~\ref{ap.radquot}.
\end{proof}

\begin{lem}\label{local.zero}
Let $A$ be a noetherian monoid and $I\subseteq A$ an ideal.
\begin{enumerate}
\item[i)] If $I$ is maximal among ideals of the form $(0:a)$, $a\in A$, 
then $I$ is an associated prime of $0$.
\item[ii)] If $a\in A$, then $a=0$ in $A$ if and only if 
$a=0$ in $A_{\frakp}$ for every prime $\frakp$ associated to $0$.
\end{enumerate}
\end{lem}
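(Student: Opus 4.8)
The plan is to prove the two parts of Lemma \ref{local.zero} in sequence, mirroring the classical ring-theoretic arguments (as in \cite[4.7]{AM}), using the machinery of associated primes already established.

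For part (i), I would first observe that by Proposition \ref{ap.ann}, the associated primes of $0$ are exactly the prime ideals occurring among the ideals $(0:a)$. So it suffices to show that if $I=(0:a)$ is maximal among all such annihilator ideals, then $I$ is prime. To see this, suppose $xy\in I$ with $x\notin I$; I must show $y\in I$. Since $x\notin I=(0:a)$, we have $ax\neq 0$. Now observe that $I=(0:a)\subseteq(0:ax)$, and $(0:ax)$ is again an annihilator ideal of the form $(0:a')$. By maximality of $I$ among such ideals, we get $I=(0:ax)$. Since $xy\in(0:a)$ means $axy=0$, i.e., $(ax)y=0$, we conclude $y\in(0:ax)=I$. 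This shows $I$ is prime, and hence an associated prime of $0$ by Proposition \ref{ap.ann}.

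For part (ii), the forward direction is trivial: if $a=0$ in $A$, then certainly $a=0$ in every localization. For the converse, I would argue by contrapositive. Suppose $a\neq 0$ in $A$; then $\Ann(a)=(0:a)$ is a proper ideal, so by the noetherian property it is contained in some ideal $I$ that is maximal among ideals of the form $(0:b)$ with $b\neq 0$. (Here one takes $b=a$ as a starting point and ascends; the noetherian hypothesis guarantees a maximal element exists.) By part (i), this $I=(0:b)$ is an associated prime $\frakp$ of $0$. Now I claim $a\neq 0$ in $A_\frakp$: since $(0:a)\subseteq I=\frakp$, no element of $A\setminus\frakp$ annihilates $a$, so $a$ does not become zero after localizing at $\frakp$. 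This produces an associated prime $\frakp$ of $0$ with $a\neq 0$ in $A_\frakp$, completing the contrapositive.

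The main obstacle, such as it is, will be handling the localization argument in part (ii) correctly for pointed monoids: I need the fact that $a$ maps to $0$ in $A_\frakp$ precisely when $sa=0$ for some $s\notin\frakp$, i.e., when $(0:a)\cap(A\setminus\frakp)\neq\emptyset$, equivalently $(0:a)\not\subseteq\frakp$. This is the monoid analogue of the standard characterization of when an element vanishes in a localization, and it should follow directly from the definition of $S^{-1}A$ with $S=A\setminus\frakp$ given in the Notation section. Once this characterization is in hand, the choice of $\frakp$ containing $(0:a)$ immediately guarantees $a\neq 0$ in $A_\frakp$, and the two parts fit together cleanly.
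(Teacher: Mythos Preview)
Your proposal is correct and follows essentially the same argument as the paper's proof: for (i) you show maximality forces $I$ to be prime via the containment $(0:a)\subseteq(0:ax)$ (the paper does the symmetric version with $ay$), and for (ii) you run the same contrapositive, using (i) plus the noetherian hypothesis to trap $(0:a)$ inside an associated prime $\frakp$ and then conclude $a\ne0$ in $A_\frakp$. The only cosmetic differences are that you invoke Proposition~\ref{ap.ann} rather than Proposition~\ref{ap.radquot} (either suffices, since a prime equals its own radical), and you spell out the localization criterion and the ascending-chain step that the paper leaves implicit.
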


\begin{proof}
Suppose that $I=(0:a)$ is maximal, as in (i). 
If $xy\in I$ but $y\not\in I$, then $axy=0$ and $ay\ne0$.  
Hence $I\subseteq I\cup Ax\subseteq (0:ay)$; by maximality, 
$I=I\cup Ax$ and $x\in I$. Thus $I$ is prime; by 
Proposition~\ref{ap.radquot}, $I$ is associated to $(0)$.

Suppose that $0\neq a\in A$, and set $I=(0:a)$.
By (i), $I\subseteq\frakp$ for some associated prime $\frakp$.
But then $a\ne0$ in $A_{\frakp}$.
\end{proof}

\section{Normal and Factorial monoids}


In this section, we establish the facts about normal monoids
needed for the theory of divisors.

The vocabulary for integral extensions of monoids mimicks that for
commutative rings. If $A$ is a submonoid of $B$, we say that an
element $b\in B$ is \emph{integral} over $A$ when $b^n\in A$ for 
some $n>0$, and the {\it integral closure} of $A$ in $B$ is the
submonoid of elements integral over $A$. If $A$ is a cancellative
monoid, we say that it is {\it normal} (or integrally closed) if
it equals its integral closure in its group completion.
(See  \cite[1.6]{chww}.)

\begin{exam}\label{ex:normal}
It is elementary that all factorial monoids are normal.
The affine toric monoids of \cite[4.1]{chww} are normal, and so are
arbitrary submonoids of a free abelian group closed under divisibility.
By \cite[4.5]{chww},
every finitely generated normal monoid is $A\wedge U_*$ for
an affine toric monoid $A$ and a finite abelian group $U$.
\end{exam}

We now present some basic facts concerning normal monoids which
parallel results for commutative rings.

\begin{lem}\label{wedge.normal}
If $A_1$ and $A_2$ are normal monoids, so is $A_1\wedge A_2$
\end{lem}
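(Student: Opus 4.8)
The plan is to reduce normality of $A_1\wedge A_2$ to that of each factor, exploiting the coordinatewise description of the smash product and of its group completion. Recall that $A_1\wedge A_2$ is the pointed monoid whose nonzero elements are the pairs $(a_1,a_2)$ with $a_i\in A_i\setminus\{0\}$, with coordinatewise product and unit $(1,1)$, and with $(a_1,a_2)$ collapsed to $0$ as soon as some $a_i=0$. First I would check that $A_1\wedge A_2$ is cancellative, so that normality is even defined: if $(a_1,a_2)\ne0$ and $(a_1,a_2)(c_1,c_2)=(a_1,a_2)(d_1,d_2)$, then $a_1,a_2\ne0$, and since each $A_i$ is cancellative with $a_i\ne 0$, comparing coordinates gives $c_i=d_i$. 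Next I would identify the group completion: a nonzero element of $(A_1\wedge A_2)_0$ is a fraction $(a_1,a_2)/(b_1,b_2)=(a_1/b_1,a_2/b_2)$ with all $a_i,b_i\ne 0$, so $(A_1\wedge A_2)_0^\times=A_{1,0}^\times\times A_{2,0}^\times$ and $A_1\wedge A_2\hookrightarrow(A_1\wedge A_2)_0$ is the coordinatewise inclusion. (Abstractly, group completion is left adjoint to the inclusion of groups and so preserves the smash product, but this explicit description is all I need.)

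With these identifications the integrality computation splits across the two coordinates. Suppose $(a_1,a_2)\in(A_1\wedge A_2)_0$ is integral over $A_1\wedge A_2$, say $(a_1^n,a_2^n)=(a_1,a_2)^n\in A_1\wedge A_2$ for some $n>0$. If $(a_1,a_2)=0$ there is nothing to prove; otherwise $a_i\in A_{i,0}^\times$ and $a_i^n\in A_i\setminus\{0\}$, so each $a_i$ is integral over $A_i$. As $A_i$ is normal, $a_i\in A_i$, and therefore $(a_1,a_2)\in A_1\wedge A_2$. Hence $A_1\wedge A_2$ equals its integral closure in its group completion, i.e.\ it is normal.

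The only step requiring genuine care is the identification of the group completion of the smash product with the smash product of the group completions (equivalently, that group completion commutes with $\wedge$). Once that is in hand, the crucial simplification is that $n$-th powers in $A_1\wedge A_2$ are taken coordinatewise, so integrality of $(a_1,a_2)$ over $A_1\wedge A_2$ is exactly integrality of $a_1$ over $A_1$ together with integrality of $a_2$ over $A_2$; the normality of the two factors then finishes the proof.
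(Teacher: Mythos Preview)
Your proof is correct and follows essentially the same approach as the paper's: identify $(A_1\wedge A_2)_0$ with $A_{1,0}\wedge A_{2,0}$, observe that $n$-th powers are taken coordinatewise, and reduce integrality of $(a_1,a_2)$ to integrality of each $a_i$ separately. You simply supply more detail than the paper does, in particular the explicit check of cancellativity and the justification of the group-completion identification.
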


\begin{proof}
The group completion of $A_1\wedge A_2$ is
$(A_1\wedge A_2)^+=A_1^+\wedge A_2^+$. If 
$a_1\wedge a_2\in A_1^+\wedge A_2^+$ is integral over $A_1\wedge A_2$,
then $(a_1\wedge a_2)^n=a_1^n\wedge a_2^n\in A_1\wedge A_2$, hence
$a_1^n\in A_1$, $a_2^n\in A_2$.  By normality, $a_1\in A_1$ and
$a_2\in A_2$.
\end{proof}

\begin{lem}\label{local.normal}
Let $A\subseteq B$ be monoids and $S\subseteq A$ be multiplicatively closed. 
We have the following:
\begin{enumerate}
\item[i)] If $B$ is integral over $A$, 
then $S^{-1}B$ is integral over $S^{-1}A$.
\item[ii)]  If $B$ is the integral closure of $A$ in a monoid $C$, 
then $S^{-1}B$ is the integral closure of $S^{-1}A$ in $S^{-1}C$.
\item[iii)]  If $A$ is normal, then $S^{-1}A$ is normal.
\end{enumerate}
\end{lem}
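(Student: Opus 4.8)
The plan is to prove the three parts in order, since each builds on the previous one, and to exploit the fact that integrality for monoids is the very simple condition $b^n\in A$ rather than the root-of-a-monic-polynomial condition from ring theory. This simplicity is what makes all three parts routine.

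For part (i), I would take an arbitrary element of $S^{-1}B$, written as $b/s$ with $b\in B$ and $s\in S$, and show it is integral over $S^{-1}A$. Since $B$ is integral over $A$, there is some $n>0$ with $b^n\in A$. Then $(b/s)^n=b^n/s^n$, and because $b^n\in A\subseteq S^{-1}A$ and $s^n\in S$ (as $S$ is multiplicatively closed), the element $s^n$ is a unit in $S^{-1}A$; hence $b^n/s^n\in S^{-1}A$. Thus $(b/s)^n\in S^{-1}A$, so $b/s$ is integral over $S^{-1}A$.

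For part (ii), the inclusion showing $S^{-1}B$ lies inside the integral closure of $S^{-1}A$ in $S^{-1}C$ follows immediately from part (i). For the reverse inclusion, I would start with an element $c/s\in S^{-1}C$ that is integral over $S^{-1}A$, so $(c/s)^n=a/t$ for some $a\in A$, $t\in S$, and some $n>0$. Clearing the denominator in the localization $S^{-1}C$ means there exists $u\in S$ with $u\,t\,c^n=u\,s^n\,a$ in $C$. Then the element $w=ut s^{\,?}c$ — here I must choose the correct power of $s$ and of the auxiliary elements so that a suitable power of $(utsc)$ lands in $A$ — is integral over $A$, hence lies in $B$, and $c/s$ differs from $w$ by a unit in $S^{-1}B$. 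The bookkeeping of exactly which monomial in $s,t,u$ to multiply by is the one place requiring care, but it is purely mechanical: one raises the relation $utc^n=us^n a$ to a power if needed so that the left side becomes an $n$-th power times an element of $A$, concluding that the relevant element of $C$ has a power in $A$ and so belongs to $B$.

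For part (iii), I would simply apply part (ii) with the ambient monoid $C$ taken to be the group completion $A_0$ of $A$. Normality of $A$ means $A$ equals its integral closure in $A_0$, so $B=A$ in the notation of (ii), and part (ii) gives that $S^{-1}A$ is the integral closure of $S^{-1}A$ in $S^{-1}A_0$. The remaining point is to identify $S^{-1}A_0$ with the group completion of $S^{-1}A$: since $S\subseteq A$, localizing $A$ at $S$ and then group-completing gives the same group as group-completing $A$ and then inverting $S$ (both invert everything), so $S^{-1}A_0=(S^{-1}A)_0$. Hence $S^{-1}A$ is integrally closed in its own group completion, i.e.\ normal. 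The only genuine obstacle anywhere in the argument is the index-chasing in part (ii); everything else is a direct transcription of the ring-theoretic proof, made easier by the monomial form of the integrality condition.
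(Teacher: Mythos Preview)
Your approach is the same as the paper's, and parts (i) and (iii) are fine as written. The only soft spot is part~(ii), where you leave the ``index-chasing'' unfinished and even suggest you might need to raise the relation to a power. You don't: from $u t c^n = u s^n a$ in $C$ you get directly
\[
(utc)^n = u^{n-1}t^{n-1}\cdot(utc^n) = u^{n-1}t^{n-1}\cdot u s^n a = u^n t^{n-1} s^n a \in A,
\]
so $utc\in B$ and $c/s=(utc)/(uts)\in S^{-1}B$. The paper streamlines this slightly by first observing that it suffices to treat elements of the form $c/1$ (since $c/s=(c/1)\cdot(1/s)$ and $1/s\in S^{-1}A$), which cuts the number of auxiliary $S$-elements from three to two; but the content is identical.
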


\begin{proof}
Suppose that $b$ is integral over $A$, i.e., $b^n\in A$ for some $n>0$.
Then $b/s\in S^{-1}B$ is integral over $S^{-1}A$ because
$(b/s)^n\in S^{-1}A$. This proves (i). For (ii), it suffices by (i) to
suppose that $c/1\in S^{-1}C$ is integral over $S^{-1}A$ and show
that $c/1$ is in $S^{-1}B$. If $(c/1)^n=a/s$ in $S^{-1}A$ then 
$c^nst=at$ in $A$ for some $t\in S$. Thus $cst$ is in $B$, 
and $c/1=(cst)/st$ is in $S^{-1}B$. 
It is immediate that ii) implies iii).
\end{proof}

\begin{lem}\label{cayley.int}
Let $A$ be a cancellative monoid and $I\subseteq A$ a finitely
generated ideal.  If $u\in A_0$ is such that $u I\subseteq I$, then
$u$ is integral over $A$.
\end{lem}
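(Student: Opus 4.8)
The plan is to run the monoid analogue of the determinant (Cayley--Hamilton) trick, exploiting the crucial simplification that a monoid has no addition, so that the relevant ``matrix'' is \emph{monomial} and no determinant is actually needed; recall also that integrality here means merely that $u^n\in A$ for some $n>0$. First I would dispose of the trivial case $u=0$: since the basepoint $0$ lies in $A$, it is integral over $A$, so I may assume $u\in A_0^\times$. Write $I=(x_1,\dots,x_k)$, discarding any zero generators, so that all $x_i\neq0$; as $I\neq0$ (which the statement implicitly requires, since for $I=0$ the hypothesis is vacuous), this set is nonempty. In a monoid the ideal $(x_1,\dots,x_k)$ is exactly the union $\bigcup_j Ax_j$ of principal ideals. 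The key structural observation is that, because there is no addition to combine terms, the hypothesis $uI\subseteq I$ forces each product $ux_i$ (computed in $A_0$) to lie in a \emph{single} principal ideal: $ux_i\in\bigcup_j Ax_j$ means $ux_i\in Ax_{\sigma(i)}$ for some index $\sigma(i)$, i.e.
\[
ux_i = a_i\,x_{\sigma(i)}, \qquad a_i\in A.
\]
This is the degenerate, ``one entry per row'' version of the system $ux_i=\sum_j a_{ij}x_j$ that drives the classical determinant proof.

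Next I would extract a cycle from the resulting self-map $\sigma$ of the finite nonempty set $\{1,\dots,k\}$. Iterating $\sigma$ from any index produces an eventually periodic orbit by the pigeonhole principle, hence a cycle $i_0\mapsto i_1\mapsto\cdots\mapsto i_{\ell-1}\mapsto i_0$ with $\sigma^\ell(i_0)=i_0$. Composing the relations $ux_{i_t}=a_{i_t}x_{i_{t+1}}$ around this cycle gives
\[
u^\ell\,x_{i_0} = \Bigl(\prod_{t=0}^{\ell-1}a_{i_t}\Bigr)\,x_{i_0}.
\]
Setting $c=\prod_t a_{i_t}\in A$ and working in the group $A_0^\times$ (into which $A$ embeds, as $A$ is cancellative), I then cancel the invertible element $x_{i_0}\neq0$ to conclude $u^\ell=c\in A$. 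Thus $u$ is integral over $A$.

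The one point requiring care --- the main, if modest, obstacle --- is that $\sigma$ need not be a bijection, so one \emph{cannot} simply multiply all $k$ relations together and cancel $\prod_i x_i$ (the right-hand indices $\sigma(i)$ need not exhaust $\{1,\dots,k\}$). One must instead isolate a genuine cycle of $\sigma$ before cancelling. Recognizing that monoid-integrality reduces to a single power $u^\ell\in A$, rather than to a monic polynomial relation, is precisely what makes this cycle argument sufficient and lets the determinant disappear.
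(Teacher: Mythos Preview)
Your proof is correct and follows essentially the same route as the paper's: the paper also picks generators $x_1,\dots,x_r$, defines a self-map $\phi$ on this finite set by $ux\in A\phi(x)$, finds an $x$ and $n$ with $\phi^n(x)=x$, composes to get $u^nx=ax$, and cancels $x$. Your write-up is simply more explicit about extracting the cycle and about the edge cases ($u=0$, zero generators, $I=0$), but the underlying idea---replace the Cayley--Hamilton determinant by a pigeonhole cycle in the induced self-map of generators---is identical.
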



\begin{proof}
Let $X=\{x_1,\ldots,x_r\}$ be the set of generators of $I$.
Since $uI\subseteq I$, there is a function $\phi:X\to X$ such that
$ux\in A\phi(x)$ for each $x\in X$.
Since $X$ is finite there is an $x\in X$ and an $n$ so that 
$\phi^n(x)=x$. For this $x$ and $n$ there is an
$a\in A$ so that $u^nx=ax$. By cancellation, $u^n=a$.
\end{proof}

\smallskip
\paragraph{\it Discrete valuation monoids}
Recall from \cite[8.1]{chww} that a \emph{valuation monoid} 
is a cancellative monoid $A$ such that for every non-zero 
$\alpha$ in the group completion $A_0$, either $\alpha\in A$ or 
$\alpha^{-1}\in A$. Passing to units, we see that $A^{\times}$ 
is a subgroup of the abelian group $A_0^\times$, and the 
{\it value group} is the quotient $A_0^\times/A^\times$.
The value group is a totally ordered abelian group 
($x\geq y$ if and only if $x/y\in A$). 
Following \cite[8.3]{chww}, we call $A$ a {\em discrete valuation
monoid}, or DV monoid for short, if the value group is infinite cyclic.
In this case, a lifting $\pi\in A$ of the positive generator of 
the value group generates the maximal ideal $\frakm$ of $A$ and
every $a\in A$ can be written $a=u\pi^n$ for some 
$u\in A^{\times}$ and $n\ge0$.
Here $\pi$ is called a \emph{uniformizing parameter} for $A$.

It is easy to see that valuation monoids are normal, and that
noetherian valuation monoids are discrete \cite[8.3.1]{chww}. We now
show that one-dimensional, noetherian normal monoids are DV monoids.

\begin{prop}\label{dvm.normal}
Every one-dimensional, noetherian normal monoid 
is a discrete valuation monoid (and conversely). 
\end{prop}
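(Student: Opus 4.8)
The plan is to show that a one-dimensional, noetherian normal monoid $A$ is a DV monoid. First I would reduce to the cancellative local situation: since $A$ is normal it is cancellative by definition, and since it is one-dimensional with the ascending chain condition, I may localize at its unique maximal ideal $\frakm$ (the unique nonzero prime, by dimension one). By Lemma \ref{local.normal}(iii), $A_\frakm$ is again normal, and localization does not change the group completion $A_0$, so it suffices to treat the case where $A$ is local with maximal ideal $\frakm$, the unique nonzero prime ideal. The goal is then to produce a uniformizing parameter $\pi$ and to show that every nonzero element has the form $u\pi^n$; equivalently, that the value group $A_0^\times/A^\times$ is infinite cyclic.

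The key step is to find an element $\pi\in\frakm$ that generates $\frakm$, the monoid analogue of the standard fact that a one-dimensional normal noetherian local ring is a DVR. I would argue as follows. Since $A$ is noetherian, $\frakm$ is finitely generated; pick a nonzero $a\in\frakm$, so $\sqrt{(a)}=\frakm$ because $\frakm$ is the only nonzero prime. Now consider the group completion element $u=x/a$ for a suitable $x\in A$. The idea is that if $\frakm$ were \emph{not} principal, I could produce an element $u\in A_0\setminus A$ with $u\frakm\subseteq\frakm$ (or $u(a)\subseteq(a)$), which by Lemma \ref{cayley.int} would force $u$ to be integral over $A$, hence in $A$ by normality, a contradiction. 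Concretely, choose $x\in\frakm$ with $x\notin aA$ but, using $\sqrt{(a)}=\frakm$ together with Lemma \ref{rad.power}, chosen minimally so that $x\frakm\subseteq aA$; then $u=a/x\in A_0$ satisfies $u^{-1}\frakm\subseteq A$. If $u^{-1}\frakm\subseteq\frakm$ then Lemma \ref{cayley.int} applies to give $u^{-1}$ integral over $A$, hence $u^{-1}\in A$, which contradicts $x\notin aA$; therefore $u^{-1}\frakm=A$, which means $\frakm=u A=(x/a\cdot\text{unit})A$ is principal, say $\frakm=\pi A$.

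Once $\frakm=\pi A$ is principal, I would show every nonzero $a\in A$ is $u\pi^n$. Since $A$ is noetherian and $\frakm$ is the only nonzero prime, $\nil$-type reasoning via Lemma \ref{rad.power} gives $\frakm^N\subseteq(a)$ for some $N$, so the chain $(a)\supseteq(a)\frakm\supseteq\cdots$ cannot descend forever and $a\notin\bigcap_n\frakm^n$; hence there is a largest $n$ with $a\in\frakm^n=\pi^n A$. Writing $a=\pi^n b$ with $b\notin\frakm=\pi A$, maximality forces $b\in A^\times$, so $a=u\pi^n$. This shows the value group is generated by the class of $\pi$ and is infinite cyclic (it is nontrivial since $A$ is not a group, being one-dimensional), so $A$ is a DV monoid. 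The converse is already recorded in the text: noetherian valuation monoids are discrete and one-dimensional. The main obstacle I anticipate is the principality step — setting up the fractional element $u$ and invoking Lemma \ref{cayley.int} correctly requires the minimality choice of $x$ and care that $u\frakm\subseteq A$ lands inside $\frakm$ unless $\frakm$ is already principal; this is where the normality hypothesis does its essential work.
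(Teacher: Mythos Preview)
Your approach is essentially the paper's: pick a nonzero $a\in\frakm$, use $\sqrt{(a)}=\frakm$ and Lemma~\ref{rad.power} to find an element $x\notin aA$ with $x\frakm\subseteq aA$, then apply Lemma~\ref{cayley.int} and normality to the fractional element $x/a$ to force $\frakm$ to be principal. Two remarks. First, the localization step is unnecessary: every pointed monoid is already local, since the non-units form the unique maximal ideal. (Also, you switch between $u=x/a$ and $u=a/x$ midway; the paper sets $\pi=a/x$ directly.)

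Second, your argument that every nonzero $a$ has the form $u\pi^n$ has a gap as written: you appeal to a \emph{descending} chain $(a)\supseteq(a)\frakm\supseteq\cdots$ terminating, but noetherian gives only ACC. The paper handles this by reusing Lemma~\ref{cayley.int}: since $\pi^{-1}\notin A$ and $A$ is normal, $\pi^{-1}$ is not integral, so $\pi^{-1}I\not\subseteq I$ for every ideal $I$; combined with $\pi^{-1}I\subseteq A$ when $I\subseteq\frakm$, this gives a strictly \emph{ascending} chain $I\subsetneq\pi^{-1}I\subsetneq\cdots$ which must reach $A$, i.e., $I=\pi^nA$. Your conclusion is still salvageable along the lines you sketch: if $a\in\frakm^{N+1}=\pi^{N+1}A$ while $\frakm^N\subseteq aA$, then $\pi^N=ab=\pi^{N+1}cb$ and cancellation makes $\pi$ a unit, a contradiction; so indeed $a\notin\bigcap_n\frakm^n$.
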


\begin{proof}
%
Suppose $A$ is a one-dimensional noetherian normal monoid, and 
choose a nonzero $x$ in the maximal ideal $\frakm$.  
By primary decomposition \ref{p.decomp}, $\sqrt{xA}$ must be $\frakm$ 
and (by Lemma \ref{rad.power}) there is an $n>0$ with
$\frakm^n\subseteq xA$, $\frakm^{n-1}\not\subseteq xA$.
Choose $y\in \frakm^{n-1}$ with $y\not\in xA$ and set $\pi=x/y\in A_0$.
Since $\pi^{-1}\not\in A$ and $A$ is normal, $\pi^{-1}$ is not 
integral over $A$.  By Lemma~\ref{cayley.int},
$\pi^{-1}\frakm\not\subseteq \frakm$; since $\pi^{-1}\frakm\subseteq A$ by
construction, we have $\pi^{-1}\frakm=A$, or $\frakm=\pi A$.

Lemma~\ref{cayley.int} also implies that $\pi^{-1}I\not\subseteq I$
for every ideal $I$. If $I\ne A$ then $I\subseteq\pi A$ so
$\pi^{-1}I\subseteq A$. Since $I=\pi^{-1}(\pi I)\subset\pi^{-1}I$,
we have an ascending chain of ideals which must terminate at
$\pi^{-n}I=A$ for some $n$. Taking $I=aA$, this shows that
every element $a\in A$ can be written $u\pi^n$ for a unique $n\ge0$
and $u\in A^\times$. Hence
every element of $A_0$ can be written $u\pi^n$ for a unique $n\in\Z$
and $u\in A^\times$, and the valuation map
$\mathrm{ord}:A_0\rightarrow \Z\cup\{\infty\}$ defined by
$\mathrm{ord}(u\pi^n)=n$ makes $A$ a discrete valuation monoid.
\end{proof}

\begin{cor}\label{DVmonoid}
If $\frakp$ is a height one prime ideal of a noetherian normal monoid $A$ 
then $A_{\frakp}$ is a discrete valuation monoid (DV monoid).
\end{cor}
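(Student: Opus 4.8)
The plan is to reduce Corollary \ref{DVmonoid} to the proposition just proved, namely Proposition \ref{dvm.normal}. The localization $A_\frakp$ is a monoid whose unique maximal ideal is $\frakp A_\frakp$, so it suffices to verify that $A_\frakp$ is a one-dimensional, noetherian, normal monoid; then Proposition \ref{dvm.normal} immediately identifies it as a DV monoid.

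First I would check \emph{normality}. Since $A$ is normal by hypothesis and $S=A\setminus\frakp$ is multiplicatively closed, Lemma \ref{local.normal}(iii) gives that $S^{-1}A=A_\frakp$ is normal. Next I would check the \emph{dimension}. The prime ideals of $A_\frakp$ correspond bijectively (and order-preservingly) to the prime ideals of $A$ contained in $\frakp$, exactly as in ring theory. Since $\frakp$ has height one, the only primes below it are $\frakp$ itself and the unique minimal prime $\{0\}$ (recall that in a localization at a height one prime the bottom prime is $\{0\}$ since units of $A_\frakp$ are inverted away); hence the longest chain of primes in $A_\frakp$ is $\{0\}\subsetneq\frakp A_\frakp$, so $\dim A_\frakp=1$.

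The one genuinely delicate point — and the step I expect to be the main obstacle — is establishing that $A_\frakp$ is \emph{noetherian}. In commutative ring theory the localization of a noetherian ring is noetherian, and the same should hold here: every ideal $J$ of $A_\frakp$ is the extension $S^{-1}I$ of its contraction $I=J\cap A$, which is a finitely generated ideal of the noetherian monoid $A$, and the generators of $I$ then generate $J=S^{-1}I$. The care required is to confirm, in the pointed-monoid setting, that ideals of $S^{-1}A$ are precisely the extensions of ideals of $A$ and that extension commutes with finite generation; this is a routine translation of the ring-theoretic argument but deserves to be spelled out since the paper is being self-contained.

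Having verified that $A_\frakp$ is one-dimensional, noetherian, and normal, Proposition \ref{dvm.normal} yields at once that $A_\frakp$ is a discrete valuation monoid, completing the proof.
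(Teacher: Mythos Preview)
Your proposal is correct and follows essentially the same approach as the paper: verify that $A_\frakp$ is one-dimensional, noetherian, and normal, then invoke Proposition~\ref{dvm.normal}. The paper's proof is terser---it cites Lemma~\ref{local.normal} for normality and takes one-dimensionality and the noetherian property as evident---whereas you spell out the noetherian and dimension checks explicitly; but the substance is the same.
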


\begin{proof}
The monoid $A_{\frakp}$ is one dimensional and normal
by Lemma~\ref{local.normal}.  Now use Proposition~\ref{dvm.normal}.
\end{proof}


\begin{lem}\label{prime.principal}
If $A$ is a noetherian normal monoid, and $\frakp$ is a prime ideal
associated to a principal ideal, then $\frakp$ has height one and
$\frakp_{\frakp}$ is a principal ideal of $A_{\frakp}$.
\end{lem}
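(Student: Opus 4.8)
The plan is to localize at $\frakp$ and construct a uniformizing parameter, following the template of Proposition~\ref{dvm.normal}.

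First, by Proposition~\ref{ap.ann} there is an element $c\in A$ with $\frakp=(aA:c)$, where $aA$ is the given principal ideal; in particular $a\neq0$ (principal ideals are nonzero by convention), and both $a$ and $c$ are nonzerodivisors since $A$ is cancellative. I would first check that this presentation localizes, i.e.\ that $\frakp_\frakp=(aA_\frakp:c)$ inside $A_\frakp$: the inclusion $\subseteq$ is immediate, and for $\supseteq$ one clears denominators in $A_0$ and uses that $\frakp$ is prime. By Lemma~\ref{local.normal}(iii) the monoid $A_\frakp$ is again normal (and noetherian), so replacing $A$ by $A_\frakp$ we may assume $A$ is local with maximal ideal $\frakm=\frakp_\frakp$ satisfying $\frakm=(aA:c)$. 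Note $c\notin aA$, since $1\notin\frakm$.

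The heart of the argument is the construction of a generator of $\frakm$. Set $u=c/a\in A_0$. For each $x\in\frakm$ we have $cx\in aA$, so $ux\in A$; thus $u\frakm\subseteq A$. Normality forbids $u\in A$, for otherwise $c=ua\in aA$, a contradiction. I would then argue that $u\frakm\not\subseteq\frakm$: were $u\frakm\subseteq\frakm$, Lemma~\ref{cayley.int} (applied to the finitely generated ideal $\frakm$) would make $u$ integral over $A$, hence $u\in A$ by normality, which we have just excluded. Therefore some $x_0\in\frakm$ has $ux_0\in A\setminus\frakm=A^\times$. Setting $\pi=u^{-1}=a/c$, this gives $\pi=x_0(ux_0)^{-1}\in\frakm$, and for every $x\in\frakm$ we have $x=\pi(ux)\in\pi A$; hence $\frakm=\pi A$ is principal. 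This construction---in particular the Cayley--Hamilton step forcing $u\frakm$ out of $\frakm$---is where I expect the real work to lie.

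Finally I would deduce that $\frakp$ has height one, equivalently that $\dim A=1$. Let $\frakq$ be a prime with $0\subsetneq\frakq\subsetneq\frakm$ and pick $0\neq z\in\frakq$. Writing $z=\pi z_1$ and using that $\frakq$ is prime with $\pi\notin\frakq$ (otherwise $\frakq\supseteq\pi A=\frakm$), we get $z_1\in\frakq$, and iterating, $z=\pi^n z_n$ with $z_n\in\frakq$ for all $n$. The ascending chain $z_1A\subseteq z_2A\subseteq\cdots$ stabilizes by the noetherian hypothesis, say $z_NA=z_{N+1}A$; combined with $z_N=\pi z_{N+1}$ and cancellation this yields $1=w\pi$ for some $w\in A$, making $\pi$ a unit---impossible. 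So no such $\frakq$ exists, $\frakm$ is the unique nonzero prime, and $\frakp$ has height one. (Alternatively, $A$ is now a noetherian normal local monoid with principal maximal ideal, which one recognizes as a DV monoid exactly as in Proposition~\ref{dvm.normal}, giving the same conclusion.) Since $\frakp_\frakp=\frakm=\pi A$ is principal, this completes the plan.
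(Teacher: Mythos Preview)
Your proof is correct and follows essentially the same route as the paper: both use the colon-ideal description $\frakp=(aA:c)$ from Proposition~\ref{ap.ann} to produce an element $u=c/a\in A_0\setminus A$ with $u\frakm\subseteq A$, and then invoke Lemma~\ref{cayley.int} together with normality to force $u\frakm\not\subseteq\frakm$, whence $\frakm=u^{-1}A$. The only cosmetic differences are that the paper packages the dichotomy via the fractional ideal $\frakp^{-1}$ rather than the single element $u$, and leaves the height-one conclusion implicit (to be read off from the DV-monoid structure), whereas you verify the localization of the colon ideal and the height-one chain argument explicitly.
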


\begin{proof}
Let $a\in A$ and $\frakp$ a prime ideal associated to $aA$ so that by
Proposition \ref{ap.ann}, $\frakp=(a:b)$ for some $b\in A\backslash
aA$.  To show $\frakp_{\frakp}\subseteq A_{\frakp}$ is principal, we
may first localize and assume that $A$ has maximal ideal $\frakp$.
Let $\frakp^{-1}=\{u\in A_0 \ | \ u\frakp\subseteq A\}$.  Since
$A\subseteq \frakp^{-1}$, we have $\frakp\subseteq
\frakp^{-1}\frakp\subseteq A$, and since $\frakp$ is maximal, we must
have $\frakp^{-1}\frakp=\frakp$ or $\frakp^{-1}\frakp=A$.

If $\frakp^{-1}\frakp=\frakp$, every element of $\frakp^{-1}$ must be
integral over $A$ by Lemma~\ref{cayley.int}.  Since $A$ is normal,
$\frakp^{-1}\subseteq A$, hence $\frakp^{-1}=A$ and $\frakp b\subseteq
aA$ implies $b/a\in\frakp^{-1}=A$.  This is only the case if
$b\in aA$, since $a$ is not a unit, contradicting the assumption.
Therefore $\frakp^{-1}\frakp=A$ and there exists $u\in\frakp^{-1}$
with $u\frakp=A$, namely $\frakp=u^{-1}A$.
\end{proof}
To finish the section we show that any noetherian normal monoid 
is the intersection of its localizations at height one primes.  As
with Corollary \ref{DVmonoid}, this result parallels the situation in
commutative rings.

\begin{thm}\label{intersect.Ap}
A noetherian normal monoid $A$ is the intersection of the 
$A_{\frakp}$ as $\frakp$ runs over all height one primes of $A$.
\end{thm}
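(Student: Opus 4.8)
The plan is to work inside the group completion $A_0$, in which $A$ and each $A_{\frakp}$ embed since $A$ is normal and hence cancellative, and to prove the two inclusions separately. The inclusion $A\subseteq\cap_{\frakp}A_{\frakp}$ is immediate because $A\subseteq A_{\frakp}$ for every $\frakp$, so the whole content is the reverse inclusion. Given $x\in A_0$ lying in every $A_{\frakp}$ with $\frakp$ of height one, I would like to produce a single ideal of $A$ whose containment behavior detects exactly when $x$ lies in $A$. To this end I introduce the \emph{denominator ideal} $I=\{c\in A: cx\in A\}$; it is readily checked to be an ideal (if $cx\in A$ and $d\in A$ then $(dc)x=d(cx)\in A$), and $x\in A$ precisely when $1\in I$, i.e.\ when $I=A$.

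The crucial reformulation is that, for any prime $\frakp$, one has $x\in A_{\frakp}$ if and only if $I\not\subseteq\frakp$: if $s\in I\setminus\frakp$ then $x=(sx)/s\in A_{\frakp}$, and conversely writing $x=a'/s$ with $s\notin\frakp$ forces $s\in I$. Thus the hypothesis that $x$ lies in $A_{\frakp}$ for every height one prime is equivalent to the statement that $I\not\subseteq\frakp$ for all height one $\frakp$. The goal therefore becomes to show that a proper denominator ideal of this particular form must be contained in some height one prime, which will contradict the hypothesis and force $I=A$.

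To locate such a prime I would write $x=a/b$ with $a,b\in A$, $b\neq0$; then $c\in I$ iff $ca\in bA$, so $I=(bA:a)$ in the notation of Proposition \ref{ap.ann}, and $x\notin A$ means $a\notin bA$, so that $I$ is proper. Passing to the quotient $A/bA$, the ideal $I$ is the preimage of the annihilator of the nonzero element $\bar a$. Using the noetherian hypothesis I would enlarge $I$ to an ideal $\frakp=(bA:c)$ maximal among those of the form $(bA:c)$ with $c\notin bA$ and containing $I$; any such maximal member is automatically maximal in the entire family, since anything strictly larger in the family would again contain $I$. By Lemma \ref{local.zero}(i) applied in $A/bA$, this $\frakp$ is an associated prime of $bA$, and since $bA$ is principal, Lemma \ref{prime.principal} guarantees that $\frakp$ has height one. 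But then $I\subseteq\frakp$ with $\frakp$ of height one contradicts the reformulation above, so $I$ cannot be proper and $x\in A$.

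The step I expect to be the main obstacle is the passage from the proper ideal $I$ to a height one prime: a general proper ideal need not sit inside any height one prime, so it is essential to exploit that $I=(bA:a)$ is a denominator ideal attached to the \emph{principal} ideal $bA$. The combination of Lemma \ref{local.zero}(i), which produces associated primes from maximal annihilator-type ideals, with Lemma \ref{prime.principal}, which forces associated primes of a principal ideal to have height one, is exactly what converts this structural feature into the desired conclusion; getting that translation precise---especially the identification $I=(bA:a)$ and the claim that a maximal element containing $I$ is maximal in the whole family---is the delicate part.
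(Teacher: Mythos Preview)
Your proof is correct and follows essentially the same route as the paper: write $x=a/b$, reduce to the principal ideal $bA$, and use that every associated prime of $bA$ has height one (Lemma~\ref{prime.principal}). The only difference is organizational: you package the argument via the denominator ideal $I=(bA:a)$ and invoke Lemma~\ref{local.zero}(i) to produce a single height-one associated prime containing $I$, whereas the paper argues contrapositively using Lemma~\ref{local.zero}(ii), showing that if $\bar a$ vanished in $(A/bA)_{\frakp}$ for \emph{every} associated prime $\frakp$ of $bA$ then $a\in bA$. Both arguments rest on the same two lemmas and the same structural observation that the obstruction is governed by associated primes of a principal ideal.
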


\begin{proof}
That $A$ is contained in the intersection is clear.  Now, suppose
$a/b\in A_0\backslash A$ so that $a\not\in bA$.  Any
$\frakp\in\Ass(b)$ has $\frakp_{\frakp}$ principal by
Lemma~\ref{prime.principal}, hence height one, and $a/b\not\in
A_{\frakp}$ when $a\not\in bA_{\frakp}$.  Therefore to find an
associated prime $\frakp$ of $bA$ with $a\not\in bA_{\frakp}$
will complete the proof.  But this is easy since $a\in bA_{\frakp}$
for every $\frakp\in\Ass(b)$ if and only if $a=0$ in
$A_{\frakp}/bA_{\frakp}=(A/bA)_{\frakp}$ for every
$\frakp\in\Ass(b)$, which happens if and only if $a=0$ in $A/bA$ by
Lemma \ref{local.zero}, which happens if and only if $a\in bA$.
Since $a\neq 0$, such a prime must exist.
\end{proof}

\medskip
\section{Normalization}\label{sec:normalize}

If $A$ is a cancellative monoid, its normalization is the 
integral closure of $A$ in its group completion $A_0$. In contrast, 
consider the problem of defining the normalization of a 
non-cancellative monoid $A$, which should be something which has a 
kind of universal property for morphisms $A\to B$ with $B$ normal. 

We will restrict ourselves to the case when the
monoid $A$ is {\it partially cancellative}  (or {\it pc}), i.e., 
a quotient $A=C/I$ of a cancellative monoid $C$
(\cite[1.3, 1.20]{chww-p}). One advantage is that $A/\frakp$
is cancellative for every prime ideal $\frakp$ of a pc monoid,
and the normalization $(A/\frakp)_\nor$ of $A/\frakp$ exists. 

\begin{lem}\label{pcuniversalnormal}
If $A$ is a pc monoid and $f:A\to B$ is a morphism with $B$ normal,
then $f$ factors through the normalization of $A/\frakp$,
where $\frakp=\ker(f)$.
\end{lem}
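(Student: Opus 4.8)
The plan is to factor $f$ in two stages: first through the quotient $A/\frakp$, and then over the inclusion $A/\frakp\hookrightarrow(A/\frakp)_\nor$, by extending along group completions and invoking the normality of $B$ at the end.

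First I would record that $\frakp=\ker(f)=f^{-1}(0)$ is a prime ideal. It is an ideal since $f$ is multiplicative with $f(0)=0$, and it is prime because $B$, being normal, is cancellative, so $(0)$ is prime in $B$: from $f(xy)=f(x)f(y)=0$ we get $f(x)=0$ or $f(y)=0$. As $A$ is pc, the quotient $A/\frakp$ is then cancellative (as recalled above), and by the universal property of the quotient monoid $f$ factors as $A\to A/\frakp\map{\bar f}B$. Moreover $\bar f$ has trivial kernel: if $\bar f(\bar a)=0$ with $a\notin\frakp$, then $f(a)=0$, so $a\in\ker(f)=\frakp$, a contradiction.

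Next I would extend $\bar f$ across group completions. Since $B$ is cancellative it embeds in its group completion $B_0$, and the trivial-kernel property forces every nonzero element of $A/\frakp$ to map to a nonzero, hence invertible, element of $B_0$. The universal property of the group completion of the cancellative monoid $A/\frakp$ then yields a homomorphism $(\bar f)_0\colon(A/\frakp)_0\to B_0$ restricting to $\bar f$ on $A/\frakp$. Recall that $(A/\frakp)_\nor$ is by definition the integral closure of $A/\frakp$ inside $(A/\frakp)_0$, so it sits between $A/\frakp$ and $(A/\frakp)_0$, and we may apply $(\bar f)_0$ to it.

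The crux is to verify that $(\bar f)_0$ carries $(A/\frakp)_\nor$ into $B$ rather than merely into $B_0$. Given $u\in(A/\frakp)_\nor$, integrality gives $u^n\in A/\frakp$ for some $n>0$, whence $(\bar f)_0(u)^n=(\bar f)_0(u^n)=\bar f(u^n)\in B$; thus $(\bar f)_0(u)$ is an element of $B_0$ whose $n$-th power lies in $B$, i.e.\ it is integral over $B$. As $B$ is normal, $(\bar f)_0(u)\in B$. Hence $(\bar f)_0$ restricts to a morphism $g\colon(A/\frakp)_\nor\to B$ agreeing with $\bar f$ on $A/\frakp$, and the composite $A\to A/\frakp\hookrightarrow(A/\frakp)_\nor\map{g}B$ recovers $f$. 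I expect this final integrality step to be the only real content: the rest is formal, and it is precisely the hypothesis that $B$ is integrally closed in $B_0$ that pulls the image back into $B$.
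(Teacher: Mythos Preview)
Your proof is correct and follows essentially the same route as the paper: factor through $A/\frakp$, extend to group completions, then use normality of $B$ to pull the image of $(A/\frakp)_\nor$ back into $B$. You have simply filled in details the paper leaves implicit (that $\frakp$ is prime, that $\bar f$ has trivial kernel, and why the extension to group completions exists).
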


\begin{proof}
The morphism $A/\frakp\to B$ of cancellative monoids induces a
homomorphism $f_0: (A/\frakp)_0\to B_0$ of their group completions. If 
$a\in(A/\frakp)_0$ belongs to $(A/\frakp)_\nor$ then there is an $n$
so that $a^n\in A/\frakp$. Then $b=f_0(a)\in B_0$
satisfies $b^n\in B$, so $b\in B$. Thus $f_0$ restricts to a map
$(A/\frakp)_\nor\to B$.
\end{proof}

\begin{subrem}\label{xz=yz}
The non-pc monoid $A=\langle x,y,z | xz=yz\rangle$ is
non-cancellative, reduced (\ref{nil(A)}) and even seminormal, yet
no map to a normal monoid has the universal property of a
normalization. Since $0$ is a prime ideal, there is no normalization
in the sense of Definition \ref{normaldefn} below, either.
Note that the disjoint union $X$ of $\MSpec(A/z)$ and $\MSpec(A/x\sim y)$
is normal but has the disadvantage that $X\to\MSpec(A)$ is not closed
(i.e., ``Going-up'' fails).
We have restricted to pc monoids in order to avoid these issues.
%
\end{subrem}

Thus the collection of maps $A\to(A/\frakp)_\nor$ has a kind
of universal property. However, a strict universal property is not 
possible within the category of monoids because monoids are local.
This is illustrated by the monoid $A=\langle x_1,x_2 | x_1x_2=0\rangle$;
see Example \ref{norm.axes} below.
Following the example of algebraic geometry, we will pass to 
the category of (pc) monoid schemes, where the normalization exists.

\begin{defn}\label{normaldefn}
Let $A$ be a pc monoid. The {\it normalization} $X_\nor$
of $X=\MSpec(A)$ is the disjoint union of the monoid schemes 
$\MSpec((A/\frakp)_\nor)$ as $\frakp$ runs over the 
minimal primes of $A$. By abuse of notation, we will refer to
$X_\nor$ as the normalization of $A$.

This notion is stable under localization: the normalization of 
$U=\MSpec(A[1/s])$ is an open subscheme of the normalization of 
$\MSpec(A)$; by Lemma \ref{local.normal}, its components are 
$\MSpec$ of the normalizations of the $(A/\frakp)[1/s]$ 
for those minimal primes $\frakp$ of A not containing $s$.

If $X$ is a pc monoid scheme, covered by affine opens $U_i$, one can
glue the normalizations $\widetilde{U}_i$ to obtain a normal monoid scheme
$X_\nor$, called the {\it normalization} of $X$. 
\end{defn}

\begin{rem}
The normalization $X_\nor$ is a normal monoid scheme: 
the stalks of $\cA_\nor$ are normal monoids.
It has the universal property that for every connected normal 
monoid scheme $Z$, every $Z\to X$ dominant on a component
factors uniquely through $X_\nor\to X$.
As this is exactly like \cite[Ex.\,II.3.8]{Hart}, we omit the details.
\end{rem}

Recall that the (categorical) product $A\times B$ of two pointed monoids
is the set-theoretic product with slotwise product and basepoint $(0,0)$.

\begin{lem}
Let $A$ be a pc monoid.
The monoid of global sections $H^0(X_\nor,\cA_\nor)$ of the 
normalization of $A$ is the product of the pointed monoids 
$(A/\frakp)_\nor$ as $\frakp$ runs over the minimal primes of $A$.
\end{lem}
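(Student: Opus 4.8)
The plan is to compute global sections directly from the definition of the normalization as a disjoint union of monoid schemes. Recall from Definition \ref{normaldefn} that $X_\nor=\coprod_\frakp \MSpec((A/\frakp)_\nor)$, where $\frakp$ ranges over the minimal primes of $A$. Since $A$ is noetherian, there are only finitely many minimal primes, so this is a \emph{finite} disjoint union. The key observation is that for any scheme obtained as a finite disjoint union of schemes $Y=\coprod_i Y_i$, a global section of the structure sheaf is precisely a tuple of global sections, one on each $Y_i$; that is, $H^0(Y,\cO_Y)=\prod_i H^0(Y_i,\cO_{Y_i})$, where the product is the categorical product of pointed monoids described just above the statement.

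First I would make this precise for monoid schemes: global sections of $\cA_\nor$ over $X_\nor$ are given by compatible germs over an open cover, but since the components $\MSpec((A/\frakp)_\nor)$ are both open and closed in the disjoint union, there are no compatibility conditions linking different components. Hence a global section is freely a choice of one global section on each component. Second, I would identify $H^0(\MSpec((A/\frakp)_\nor),\cA_\nor)$ with the monoid $(A/\frakp)_\nor$ itself. This is the statement that for an affine monoid scheme $\MSpec(B)$ one has $H^0(\MSpec(B),\cA)=B$, which is the monoid analogue of the standard fact $H^0(\Spec R,\cO)=R$ and follows from the construction of the structure sheaf on $\MSpec$; here $B=(A/\frakp)_\nor$ is a genuine monoid (the normalization of the cancellative monoid $A/\frakp$) because $A$ is pc, so $A/\frakp$ is cancellative and its normalization exists as a monoid.

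Combining these two steps gives
\[
H^0(X_\nor,\cA_\nor)=\prod_\frakp H^0\bigl(\MSpec((A/\frakp)_\nor),\cA_\nor\bigr)=\prod_\frakp (A/\frakp)_\nor,
\]
the product taken over the minimal primes $\frakp$ of $A$, which is exactly the assertion. The only point requiring a little care is the behavior of the basepoint: in the category of pointed monoids the product $\prod_\frakp (A/\frakp)_\nor$ has basepoint the all-zero tuple, matching the single zero section of $\cA_\nor$ obtained by taking the zero section on each component, so the identification respects basepoints.

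I expect the main (and essentially only) obstacle to be bookkeeping rather than conceptual: verifying that $H^0$ of the structure sheaf of $\MSpec(B)$ recovers $B$ for a monoid $B$, and that a disjoint union of affine monoid schemes has global sections equal to the categorical product in the pointed setting. Both are routine translations of the scheme-theoretic facts into the monoid-scheme language of \cite{chww} or \cite{Vezz}, so the proof will be short, with the finiteness of the set of minimal primes (from the noetherian hypothesis) ensuring the disjoint union is finite and the product is the honest categorical product.
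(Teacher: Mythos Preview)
Your proposal is correct and follows exactly the paper's approach: the paper's proof is the one-line observation that for any sheaf $\cF$ on a disjoint union $X=\coprod X_i$, the sheaf axiom gives $H^0(X,\cF)=\prod H^0(X_i,\cF)$. Note that this holds for arbitrary (not just finite) disjoint unions, so your appeal to noetherianness to ensure finitely many minimal primes is unnecessary---and indeed the lemma does not assume $A$ noetherian.
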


\begin{proof}
For any sheaf $\cF$ on a disjoint union $X=\coprod X_i$, 
$H^0(X,\cF)=\prod H^0(X_i,\cF)$ by the sheaf axiom.
\end{proof}

\begin{exam}\label{norm.axes}
The normalization of $A=\langle x_1,x_2 | x_1x_2=0\rangle$
is the disjoint union of the affine lines $\langle x_i\rangle$.
The monoid of its global sections is
$\langle x_1\rangle\times\langle x_2\rangle$, and
is generated by $(1,0),(0,1),(x_1,1),(1,x_2)$.
\end{exam}

\goodbreak
{\it Seminormalization}

\medskip
Recall from \cite[1.7]{chww-p} that a reduced monoid $A$ is
{\it seminormal} if whenever $b,c\in A$ satisfy $b^3=c^2$
there is an $a\in A$ such that $a^2=b$ and $a^3=c$.
Any normal monoid is seminormal, and $\langle x,y|xy=0\rangle$
is seminormal but not normal. The passage from monoids to seminormal monoids 
(and monoid schemes) was critical in \cite{chww-p} for understanding
the behaviour of cyclic bar constructions under the 
resolution of singularities of a pc monoid scheme.

The {\it seminormalization} of a monoid $A$ is a seminormal monoid $A_\sn$,
together with an injective map $A_\red\to A_\sn$ such that 
every $b\in A_\sn$ has $b^n\in A_\red$ for all $n\gg0$. 
It is unique up to isomorphism, and any monoid map $A\to C$ with 
$C$ seminormal factors uniquely through $A_\sn$; see \cite[1.11]{chww-p}.
In particular, the seminormalization of $A$ lies between $A$ and
its normalization,  i.e., 
$\MSpec(A)_\nor\to\MSpec(A)$ factors through $\MSpec(A_\sn)$.

We shall restrict ourselves to the seminormalization of
pc monoids (and monoid schemes). By \cite[1.15]{chww-p}, if $A$ is a
pc monoid, the seminormalization of $A$  exists and is a pc monoid.
When $A$ is cancellative, $A_\sn$ is easy to construct.

\begin{exam}\label{sn.cancellative}
When $A$ is cancellative,
$A_\sn =\{ b\in A_0: b^n\in A\text{ for }n\gg0\}$;
this is a submonoid of $A_\nor$, and $A_\nor=(A_\sn)_\nor$. 
Since the normalization of a cancellative monoid induces a homeomorphism 
on the topological spaces $\MSpec$ \cite[1.6.1]{chww}, 
so does the seminormalization.
\end{exam}

If $A$ has more than one minimal prime, then 
$\MSpec(A)_\nor\to\MSpec(A)$ cannot be a bijection.
However, we do have the following result.

\begin{lem}\label{sn.homeo}
For every pc monoid $A$, $\MSpec(A_\sn)\to\MSpec(A)$ 
is a homeomorphism of the underlying topological spaces.
\end{lem}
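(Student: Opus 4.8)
\emph{Setup and reduction.} The plan is to reduce to an integral extension of reduced monoids and then show that the induced map on $\MSpec$ is a continuous closed bijection. First I would use that every nilpotent lies in every prime ideal (Remark~\ref{nil(A)}), so that $A\to A_\red$ induces a homeomorphism $\MSpec(A_\red)\to\MSpec(A)$; since $(A_\red)_\sn=A_\sn$, I may replace $A$ by $A_\red$ and assume $A$ is reduced. Writing $B=A_\sn$, the structure map is then an injection $A\hookrightarrow B$ for which every $b\in B$ satisfies $b^n\in A$ for all $n\gg0$, so $B$ is integral over $A$. The induced map $\pi\colon\MSpec(B)\to\MSpec(A)$, $\frakq\mapsto\frakq\cap A$, is continuous for the Zariski topology, so it remains to prove that $\pi$ is a closed bijection.

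\emph{Bijectivity.} Injectivity would follow immediately from the integral property: if $\frakq_1\cap A=\frakq_2\cap A$ and $b\in\frakq_1$, pick $n$ with $b^n\in A$, so that $b^n\in\frakq_1\cap A\subseteq\frakq_2$ and hence $b\in\frakq_2$ since $\frakq_2$ is prime; by symmetry $\frakq_1=\frakq_2$. For surjectivity I would prove lying over, using the monoid-specific fact that every monoid is local, with unique maximal ideal its set of non-units. Given $\frakp\in\MSpec(A)$, set $S=A\setminus\frakp$ and pass to $A_\frakp\hookrightarrow S^{-1}B$, which is again integral by Lemma~\ref{local.normal}(i). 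Let $\frakm$ be the maximal ideal of $S^{-1}B$. If some $x\in\frakp A_\frakp$ were a unit of $S^{-1}B$, integrality would give $x^{-n}\in A_\frakp$ for some $n$, forcing $x$ to be a unit of $A_\frakp$, which is impossible. Hence $\frakp A_\frakp\subseteq\frakm$, so the prime $\frakm\cap A_\frakp$ contains the maximal ideal $\frakp A_\frakp$ and therefore equals it; the prime $\frakm$ then descends to a prime $\frakq$ of $B$ disjoint from $S$ with $\frakq\cap A=\frakp$.

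\emph{Closedness and the main obstacle.} For the homeomorphism I would show $\pi$ is closed by proving $\pi\bigl(V(J)\bigr)=V(J\cap A)$ for every ideal $J\subseteq B$. The inclusion $\subseteq$ is clear; for $\supseteq$, given $\frakp\supseteq J\cap A$, I would apply the lying-over argument above to the extension $A/(J\cap A)\hookrightarrow B/J$, which is injective (its kernel is $J\cap A$) and still integral, yielding a prime $\frakq\supseteq J$ of $B$ with $\frakq\cap A=\frakp$. A continuous closed bijection is a homeomorphism, which finishes the proof. I expect this closedness (going-up) step to be the main obstacle, since it is where the geometry of the fibers enters; however, it reduces cleanly to lying over, and the real leverage throughout is the fact that a monoid is local, so that the extended maximal ideal $\frakp A_\frakp$ can never meet the units of $S^{-1}B$.
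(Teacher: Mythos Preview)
Your proof is correct, but it follows a genuinely different route from the paper's. The paper exploits the pc hypothesis structurally: writing $A=C/I$ with $C$ cancellative, it invokes the identity $A_\sn=C_\sn/(IC_\sn)$ from \cite[1.14]{chww-p} to recognize $\MSpec(A_\sn)$ and $\MSpec(A)$ as the closed subspaces of $\MSpec(C_\sn)$ and $\MSpec(C)$ cut out by $I$, and then quotes the cancellative case (Example~\ref{sn.cancellative}, ultimately \cite[1.6.1]{chww}) that $\MSpec(C_\sn)\to\MSpec(C)$ is a homeomorphism. Your argument instead proves lying-over and closedness directly for the integral inclusion $A_\red\hookrightarrow A_\sn$, using only that every monoid is local. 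This is longer but entirely self-contained: it requires neither the compatibility formula $A_\sn=C_\sn/(IC_\sn)$ nor the external citation, and in fact your lying-over argument works verbatim for any injective integral extension of pointed monoids, so it reproves the cancellative case rather than quoting it. The paper's approach, by contrast, isolates the structural reason the result holds---seminormalization commutes with quotients by ideals---and reduces the pc case to the cancellative one in two lines.
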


\begin{proof}
Write $A=C/I$ for a cancellative monoid $C$, so $\MSpec(A)$ is the
closed subspace of $\MSpec(C)$ defined by $I$.
By \cite[1.14]{chww-p}, $A_\sn=C_\sn/(IC_\sn)$. Thus $\MSpec(A_\sn)$
is the closed subspace of $\MSpec(C_\sn)$ defined by $I$.
Since $\MSpec(C_\sn)\to\MSpec(C)$ is a homeomorphism 
(by \ref{sn.cancellative}), the result follows.
\end{proof}

The seminormalization of any pc monoid scheme exists
and has a universal property (see \cite[1.21]{chww-p}).
It may be constructed by glueing, since the seminormalization of 
$A$ commutes with localization \cite[1.13]{chww-p}.
Thus if $X$ is a pc monoid scheme then there are canonical maps
\[
X_\nor \to X_\sn\to X_\red\to X,
\]
and $X_\sn\to X$ is a homeomorphism by Lemma \ref{sn.homeo}.
We will return to this notion in Proposition \ref{Pic.Xsn}.

\bigskip
\section{Weil divisors}\label{sec:Weil}

Although the theory of Weil divisors is already interesting for
normal monoids, it is useful to state it for normal monoid schemes.

Let $X$ be a normal monoid scheme with generic monoid $A_0$.
Corollary \ref{DVmonoid} states that the stalk $\cA_x$ is a DV monoid
for every height one point $x$ of $X$. When $X$ is separated, a
discrete valuation on $A_0$ uniquely determines a point $x$ \cite[8.9]{chww}.

By a {\it Weil divisor} on $X$ we mean an element of the free abelian
group $\Div(X)$ generated by the height one points of $X$.
We define the divisor of $a\in A_0^\times$ to be the sum,
taken over all height one points of $A$:
\[
\div(a) = \sum_x v_x(a) x.
\]
When $A$ is of finite type, there are only finitely many prime ideals
in $A$, so this is a finite sum. Divisors of the form $\div(a)$
are called {\it principal divisors}. Since $v_x(ab)=v_x(a)+v_x(b)$,
the function $\div:A_0^\times\to\Div(X)$ is a group homomorphism, and
the principal divisors form a subgroup of $\Div(X)$.

\begin{defn}
The {\it Weil divisor class group} of $X$, written as $\Cl(X)$,
is the quotient of $\Div(X)$ by the subgroup of principal divisors.
\end{defn}

\begin{lem}\label{Cl.sequence}
If $X$ is a normal monoid scheme of finite type, there is an exact sequence
\[
1 \to \cA(X)^\times \to A_0^\times \map{\div} \Div(X) \to \Cl(X) \to 0.
\]
\end{lem}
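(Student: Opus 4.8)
The plan is to verify exactness at each of the four nontrivial spots in the sequence
\[
1 \to \cA(X)^\times \to A_0^\times \map{\div} \Div(X) \to \Cl(X) \to 0,
\]
treating the surjectivity onto $\Cl(X)$ and the injectivity of $\cA(X)^\times \to A_0^\times$ as the easy endpoints, and concentrating effort on exactness at $A_0^\times$, where the kernel of $\div$ must be identified with the global units $\cA(X)^\times$.

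First I would dispose of the two ends. Exactness at $\Cl(X)$ is immediate: by definition $\Cl(X)$ is the cokernel of $\div : A_0^\times \to \Div(X)$, so the map $\Div(X) \to \Cl(X)$ is surjective with kernel exactly the principal divisors, which is the image of $\div$. For exactness at $\cA(X)^\times$, the map $\cA(X)^\times \to A_0^\times$ is the restriction to units of the canonical inclusion of global sections into the generic monoid $A_0$; since $X$ is normal (hence reduced and with $A_0$ the common group completion localizing every stalk at the generic point), this map is injective, so the sequence starts with $1$.

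The substance is exactness at $A_0^\times$, i.e.\ showing $\ker(\div) = \cA(X)^\times$. One containment is clear: a global unit $u \in \cA(X)^\times$ has $v_x(u)=0$ at every height one point $x$, since $u$ and $u^{-1}$ both lie in every stalk $\cA_x$, forcing $\mathrm{ord}_x(u)=0$ in the DV monoid $\cA_x$ (Corollary \ref{DVmonoid}); hence $\div(u)=0$. For the reverse containment, suppose $a \in A_0^\times$ satisfies $\div(a)=0$, so $v_x(a)=0$ for every height one point $x$. The goal is to produce a global unit, and here I would reduce to the affine case and invoke Theorem \ref{intersect.Ap}: on an affine open $\MSpec(B)$ with $B$ normal and group completion $A_0$, the condition $v_{\frakp}(a)\ge 0$ for all height one primes $\frakp$ means $a$ lies in $A_{\frakp}$ for every such $\frakp$, so $a \in \bigcap_{\mathrm{ht}\,\frakp = 1} B_{\frakp} = B$ by Theorem \ref{intersect.Ap}; applying the same reasoning to $a^{-1}$ (whose divisor is also zero) gives $a^{-1} \in B$, so $a \in B^\times$. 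Thus $a$ restricts to a unit on each affine open, and by the sheaf property these glue to an element of $\cA(X)^\times$.

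The main obstacle will be the gluing and the passage between the scheme and its affine pieces: one must check that the local sections $a \in B^\times$ on the various affine opens agree on overlaps (which is automatic, since they are all the single element $a$ of the common generic group $A_0$) and that being a unit is a local condition, so that a global section which is a unit in every stalk is a global unit. This requires knowing that $A_0$ is genuinely the common group completion of all the stalks — which is where normality and connectedness (implicit in having a single generic monoid $A_0$) are used — together with the separatedness needed for Theorem \ref{intersect.Ap} to apply stalk-wise. I expect the finite type hypothesis to enter only to guarantee that $\Div(X)$ is the free group on finitely many height one points, so that $\div(a)$ is a well-defined finite sum; the exactness itself is formal once Theorem \ref{intersect.Ap} is in hand.
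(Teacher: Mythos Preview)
Your proposal is correct and follows essentially the same approach as the paper: the key step is identifying $\ker(\div)$ with $\cA(X)^\times$, which you (like the paper) reduce to the affine case and deduce from Theorem~\ref{intersect.Ap}. The paper's proof is far terser---it simply asserts that the only nontrivial point is this kernel identification and invokes Theorem~\ref{intersect.Ap}---whereas you spell out the endpoints, the easy containment, and the gluing; one small quibble is that Theorem~\ref{intersect.Ap} is a statement about a single noetherian normal monoid, so separatedness plays no role there.
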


\begin{proof}
We may suppose that $X$ is connected. It suffices to show that if
$a\in A_0^\times$ has $\div(a)=0$ then $a\in\cA(X)^\times$. This
follows from Theorem \ref{intersect.Ap}: 
when $X=\MSpec(A)$, $A$ is the intersection of the $A_x$.
\end{proof}

\begin{subex} 
(Cf.\,\cite[II.6.5.2]{Hart}) Let $A$ be the submonoid of $\Z^2_*$
generated by $x=(1,0)$, $y=(1,2)$ and $z=(1,1)$, and set $X=\MSpec(A)$.
(This is the toric monoid scheme $xy=z^2$.) Then $A$ has exactly two
prime ideals of height one: $p_1=(x,z)$ and $p_2=(y,z)$. Since 
$\div(x)=2p_1$ and $\div(z)=p_1+p_2$, we see that $\Cl(X)=\Z/2$.
\end{subex}

\begin{subex}
If $X$ is the non-separated monoid scheme obtained by gluing together 
$n+1$ copies of $\bA^1$ along the common (open) generic point, 
then $\Cl(X)=\Z^n$, as we see from Lemma \ref{Cl.sequence}. 
\end{subex}

If $U$ is an open subscheme of $X$, with complement $Z$, 
the standard argument \cite[II.6.5]{Hart} shows that there is a 
surjection $\Cl(X)\to\Cl(U)$, that it is an isomorphism if $Z$ 
has codimension $\ge2$, and that if $Z$ is the closure of a height one
point $z$ then there is an exact sequence
\[
\Z\; \map{z} \Cl(X) \to \Cl(U) \to 0.
\]

\begin{prop}
$\Cl(X_1\times X_2) = \Cl(X_1)\oplus\Cl(X_2).$
\end{prop}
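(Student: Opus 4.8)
The plan is to reduce everything to the local structure of the smash product and then compare cokernels in the exact sequence of Lemma~\ref{Cl.sequence}. I may assume $X_1$ and $X_2$ are connected (and hence irreducible, since their local monoids are cancellative), because $\Div$ and $\Cl$ are additive over connected components and the product distributes over them. Working locally, write $X_i=\MSpec(A_i)$ with $A_i$ normal; then $X_1\times X_2=\MSpec(A_1\wedge A_2)$, and $A_1\wedge A_2$ is again normal by Lemma~\ref{wedge.normal}.

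First I would analyze $\MSpec(A_1\wedge A_2)$. Since $a_1\wedge a_2=(a_1\wedge 1)(1\wedge a_2)$, a prime $\frakp$ of $A_1\wedge A_2$ is determined by the pair $(\frakp_1,\frakp_2)$, where $\frakp_i$ is the prime of $A_i$ pulled back along $A_i\to A_1\wedge A_2$; conversely every such pair arises. Thus $\MSpec(A_1\wedge A_2)=\MSpec(A_1)\times\MSpec(A_2)$ as sets, with localizations $(A_1\wedge A_2)_\frakp=(A_1)_{\frakp_1}\wedge (A_2)_{\frakp_2}$. Because $\frakp\subseteq\frakp'$ holds exactly when $\frakp_1\subseteq\frakp_1'$ and $\frakp_2\subseteq\frakp_2'$, heights add: $\mathrm{ht}(\frakp)=\mathrm{ht}(\frakp_1)+\mathrm{ht}(\frakp_2)$. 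As each $A_i$ is cancellative, $(0)$ is its unique minimal prime, so the height one primes of $A_1\wedge A_2$ are exactly those of the form $(\frakp_1,(0))$ with $\mathrm{ht}(\frakp_1)=1$, or $((0),\frakp_2)$ with $\mathrm{ht}(\frakp_2)=1$. This identifies the height one points of $X_1\times X_2$ with those of $X_1$ together with those of $X_2$, whence $\Div(X_1\times X_2)=\Div(X_1)\oplus\Div(X_2)$.

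Next I would match the remaining terms. The group completion satisfies $(A_1\wedge A_2)^+=A_1^+\wedge A_2^+$ (as in the proof of Lemma~\ref{wedge.normal}), so $A_0^\times=(A_1)_0^\times\times (A_2)_0^\times$. I would then verify that $\div$ respects this splitting: at a height one point $x$ lying over $\frakp_1$ (so $\frakp_2=(0)$), the DV monoid is $(A_1)_{\frakp_1}\wedge (A_2)_0$; since $(A_2)_0$ is a group, every element of $(A_2)_0^\times$ is a unit there while a uniformizing parameter of $(A_1)_{\frakp_1}$ remains one, so $v_x$ restricts to $v_{\frakp_1}$ on $(A_1)_0^\times$ and vanishes on $(A_2)_0^\times$. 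Symmetrically for points over $X_2$. Hence, under the identifications above, $\div=\div_{X_1}\oplus\div_{X_2}$.

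Finally I would conclude by comparing cokernels. Since $\Cl(X)$ is the cokernel of $\div\colon A_0^\times\to\Div(X)$, and $\div$ has been shown to be the direct sum of $\div_{X_1}$ and $\div_{X_2}$, its cokernel is $\Cl(X_1)\oplus\Cl(X_2)$. Globalizing from the affine case is routine, as the classification of height one points and the valuation computations are local on $X_1\times X_2$. The main obstacle is the local structure step: establishing $\MSpec(A_1\wedge A_2)=\MSpec(A_1)\times\MSpec(A_2)$ with additivity of heights, and in particular checking that the DV monoid at a height one point of the product is the smash of a DV monoid with a group, so that the ``cross'' valuations vanish. Once this is in hand, the rest follows formally from Lemma~\ref{Cl.sequence}.
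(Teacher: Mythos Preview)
Your proposal is correct and follows essentially the same approach as the paper: identify $\Div(X_1\times X_2)\cong\Div(X_1)\oplus\Div(X_2)$ via the product structure of the underlying space, note that the generic monoid is the smash product so $A_0^\times$ splits, check that $\div$ respects the splitting, and take cokernels. The only difference is presentational: where you work out the prime-ideal bijection and additivity of heights for $A_1\wedge A_2$ by hand, the paper simply cites \cite[3.1]{chww} for the fact that the underlying space of $X_1\times X_2$ is the product, and then states $\div(a_1\wedge a_2)=\div(a_1)+\div(a_2)$ without the explicit DV-monoid verification you supply.
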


\begin{proof}
By \cite[3.1]{chww}, the product monoid scheme exists, and its underlying
topological space is the product. Thus a codimension one point of
$X_1\times X_2$ is either of the form $x_1\times X_2$ or $X_1\times x_2$.
Hence $\Div(X_1\times X_2) \cong \Div(X_1)\oplus\Div(X_2)$.
It follows from Lemma \ref{wedge.normal} that $X_1\times X_2$ 
is normal, and the pointed monoid at its generic point is the
smash product of the pointed monoids $A_1$ and $A_2$ of $X_1$ and $X_2$ at
their generic points. If $a_i\in A_i$ then the principal divisor of
$a_1\wedge a_2$ is $\div(a_1)+\div(a_2)$. Thus
\[
\Cl(X_1\times X_2)=
\frac{\Div(X_1)\oplus\Div(X_2)}{\div(A_1)\oplus\div(A_2)}
\cong \Cl(X_1)\oplus\Cl(X_2).   \qedhere
\]
\end{proof}

\begin{exam}\label{normal.v.toric}
By \cite[4.5]{chww}, any connected separated normal monoid scheme $X$
decomposes as the product of a toric monoid scheme $X_\Delta$ and
$\MSpec(U_*)$ for some finite abelian group $U$. 
($U$ is the group of units of $X$.) Since 
$U_*$ has no height one primes, $\Div(X)=\Div(X_\Delta)$ and 
the Weil class group of $X$ is $\Cl(X_\Delta)$,
the Weil class group of the associated toric monoid scheme.

By construction \cite[4.2]{chww}, the points of $X_\Delta$ 
correspond to the cones of the fan $\Delta$ and the height one points 
of $X_\Delta$ correspond to the edges in the fan. Thus our Weil
divisors correspond naturally to what Fulton calls a ``$T$-Weil
divisor'' on the associated toric variety $X_k$ (over a field $k$)
in \cite[3.3]{F}.
Since the group completion $A_0$ is the free abelian group $M$
associated to $\Delta$, it follows from \cite[3.4]{F} that our
Weil divisor class group $\Cl(X_\Delta)$ is isomorphic to the
Weil divisor class group $\Cl(X_k)$ of associated toric variety.
\end{exam}

\newpage
\section{Invertible sheaves}\label{sec:Pic}

Let $X$ be a monoid scheme with structure sheaf $\cA$. An
{\it invertible sheaf} on $X$ is a sheaf $\cL$ of $\cA$-sets
which is locally isomorphic to $\cA$ in the Zariski topology.
If $\cL_1,\cL_2$ are invertible sheaves, their smash product is the
sheafification of the presheaf $U\mapsto \cL_1(U)\wedge_{\cA(U)}\cL_2(U)$;
it is again an invertible sheaf. Similarly, $\cL^{-1}$ is the
sheafification of $U\mapsto \Hom_{\cA}(\cL(U),\cA(U))$, and 
evaluation $\cL\wedge_{\cA}\cL^{-1}\map{\sim}\cA$ is an isomorphism.
Thus the set of isomorphism classes of invertible sheaves on $X$ is a 
group under the smash product.

\begin{defn}\label{def:Pic}
The {\it Picard group} $\Pic(X)$ is the group of isomorphism classes
of invertible sheaves on $X$.
\end{defn}

Since a monoid $A$ has a unique maximal ideal (the non-units),
an invertible sheaf on $\MSpec(A)$ is just an $A$-set isomorphic to $A$.
This proves:

\begin{lem}\label{Pic.affine}
For every affine monoid scheme $X=\MSpec(A)$, $\Pic(X)=0$.
\end{lem}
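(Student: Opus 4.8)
The plan is to show that every invertible sheaf $\cL$ on $X=\MSpec(A)$ is globally isomorphic to the structure sheaf $\cA$, so that $\Pic(X)$ is trivial. The entire argument rests on the fact, noted just above the statement, that a monoid has a unique maximal ideal, namely the set $\frakm=A\setminus A^\times$ of non-units. This $\frakm$ is therefore the unique closed point of $X$, and the strategy is to exploit that there is no room to localize near it.

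First I would establish the key topological observation that $X$ itself is the only open neighborhood of the closed point $\frakm$. The basic open sets of $\MSpec(A)$ are the sets $\MSpec(A[1/f])=\{\frakp : f\notin\frakp\}$, and $\frakm$ lies in such a set precisely when $f\notin\frakm$, i.e.\ when $f\in A^\times$. But a unit lies in no proper prime ideal, since $f\in\frakp$ would force $1=f\cdot f^{-1}\in\frakp$ and hence $\frakp=A$; thus $f\in A^\times$ gives $\MSpec(A[1/f])=X$. Consequently every basic open containing $\frakm$ equals $X$, and the same then holds for an arbitrary open set containing $\frakm$.

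With this in hand the conclusion is immediate. By definition $\cL$ is locally isomorphic to $\cA$, so there is an open $U$ with $\frakm\in U$ and $\cL|_U\cong\cA|_U$; by the previous step $U=X$, whence $\cL\cong\cA$ as sheaves of $\cA$-sets. Therefore every invertible sheaf is trivial and $\Pic(X)=0$. There is no real obstacle to overcome here: the whole content is that the unique closed point of an affine monoid scheme admits no proper open neighborhood, which automatically promotes local triviality to global triviality. This simply makes precise the remark that an invertible sheaf on $\MSpec(A)$ is nothing more than an $A$-set isomorphic to $A$.
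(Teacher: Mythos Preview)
Your proof is correct and is essentially the same argument as the paper's, just written out in full detail: the paper simply observes that since $A$ has a unique maximal ideal, an invertible sheaf on $\MSpec(A)$ is nothing but an $A$-set isomorphic to $A$, which is exactly what your topological argument about the only open neighborhood of $\frakm$ being $X$ makes explicit.
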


For any monoid $A$, the group of $A$-set automorphisms of $A$ is
canonically isomorphic to $A^\times$. Since the subsheaf $\Gamma$ of
generators of an invertible sheaf $\cL$ is a torsor for $\cA^\times$,
and $\cL=\cA\wedge_{\cA^\times}\Gamma$, this proves:

\begin{lem}\label{Pic=H1}
$\Pic(X) \cong H^1(X,\cA^\times)$.
\end{lem}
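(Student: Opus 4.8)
The plan is to identify $\Pic(X)$ with Čech cohomology $\check{H}^1(X,\cA^\times)$ and then argue that for the sheaf of abelian groups $\cA^\times$ the Čech group agrees with the derived-functor group $H^1$. The starting point is the preceding observation: an invertible sheaf $\cL$ is locally isomorphic to $\cA$, so choosing an open cover $\{U_i\}$ on which $\cL|_{U_i}\cong\cA|_{U_i}$, the transition automorphisms live in $\cA^\times(U_i\cap U_j)$ and satisfy the cocycle condition, giving a class in $\check{H}^1$. The text has already extracted the essential algebraic input, namely that $\operatorname{Aut}_{\cA}(\cA)=\cA^\times$ and that the subsheaf $\Gamma$ of generators of $\cL$ is an $\cA^\times$-torsor with $\cL=\cA\wedge_{\cA^\times}\Gamma$. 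So the real content is to make the torsor formalism precise.

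First I would recall the standard dictionary between invertible sheaves and torsors. Concretely, I would define a map from isomorphism classes of invertible $\cA$-set sheaves to isomorphism classes of $\cA^\times$-torsors by $\cL\mapsto\Gamma$, where $\Gamma\subseteq\cL$ is the subsheaf whose sections over $U$ are those $s\in\cL(U)$ that generate $\cL|_U$ as an $\cA|_U$-set (equivalently, the image of $1$ under some local trivialization). The inverse sends a torsor $\Gamma$ to $\cA\wedge_{\cA^\times}\Gamma$, and I would check these are mutually inverse using the stated isomorphism $\cL\cong\cA\wedge_{\cA^\times}\Gamma$. This is a routine verification that the two constructions respect the smash-product group law on the $\cL$ side and the Baer-sum group law on the torsor side, so the bijection is a group isomorphism.

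Next I would invoke the classical identification of the pointed set of isomorphism classes of $\cG$-torsors with $\check{H}^1(X,\cG)$ for a sheaf of groups $\cG$; since $\cA^\times$ is a sheaf of \emph{abelian} groups, this is a group isomorphism and $\check{H}^1(X,\cA^\times)$ carries its usual cohomological structure. Finally, because $\cA^\times$ is a sheaf of abelian groups on the topological space $X$, I would use the general fact (for any space, regardless of separation or quasicompactness hypotheses) that $\check{H}^1(X,\cF)\cong H^1(X,\cF)$ for the derived-functor cohomology of an abelian sheaf $\cF$; this is where the explicit Čech description is upgraded to the sheaf-cohomological statement $\Pic(X)\cong H^1(X,\cA^\times)$.

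The main obstacle is conceptual rather than computational: one must be careful that the torsor $\Gamma$ is genuinely a sheaf of $\cA^\times$-sets (not merely a presheaf) and that the gluing $\cA\wedge_{\cA^\times}\Gamma$ recovers $\cL$ on the nose after sheafification, as in the smash-product constructions used to define $\cL^{-1}$. Since the paper has already asserted both $\operatorname{Aut}_{\cA}(\cA)=\cA^\times$ and $\cL=\cA\wedge_{\cA^\times}\Gamma$, the torsor correspondence is essentially handed to us, and the remaining work is the standard torsor-to-$\check{H}^1$ and $\check{H}^1$-to-$H^1$ identifications, which go through verbatim as in the sheaf-of-groups setting for topological spaces.
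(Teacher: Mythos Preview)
Your proposal is correct and follows essentially the same approach as the paper. The paper's entire proof is the sentence preceding the lemma (``Since the subsheaf $\Gamma$ of generators of an invertible sheaf $\cL$ is a torsor for $\cA^\times$, and $\cL=\cA\wedge_{\cA^\times}\Gamma$, this proves:''), which invokes exactly the torsor correspondence you spell out; you simply make explicit the passage through $\check{H}^1$ and the identification $\check{H}^1\cong H^1$ that the paper leaves as ``standard.''
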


Recall that a morphism $f:Y\to X$ of monoid schemes is {\it affine}
if $f^{-1}(U)$ is affine for every affine open $U$ in $X$;
see \cite[6.2]{chww}.

\begin{prop}\label{affine.f_*}
If $f:Y\to X$ is an affine morphism of monoid schemes, then the
direct image $f_*$ is an exact functor from sheaves (of abelian groups) 
on $Y$ to sheaves on $X$. In particular, 
$H^*(Y,\cL)\cong H^*(X,p_*\cL)$ for every sheaf $\cL$ on $Y$.
\end{prop}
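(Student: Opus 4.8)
The plan is to mimic the standard proof from scheme theory (as in Hartshorne III.8) that direct image along an affine morphism is exact and computes cohomology correctly. The key point is that exactness of $f_*$ on sheaves of abelian groups is a \emph{local} question on the base $X$, so it suffices to check it over an affine open $U = \MSpec(A)$ in $X$. By the definition of an affine morphism, $f^{-1}(U)$ is then an affine monoid scheme $\MSpec(B)$ for some monoid $B$, and $f$ restricts to the structure morphism $\MSpec(B)\to\MSpec(A)$.

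First I would reduce to the affine-over-affine case as above. Then the heart of the matter is to show that the global sections functor $H^0(\MSpec(B),-)$, viewed as a functor to abelian groups, is \emph{exact} on sheaves of abelian groups over an affine monoid scheme. This is where the special feature of monoid schemes enters: since a monoid $B$ has a \emph{unique} maximal ideal (the non-units), the space $\MSpec(B)$ has a unique closed point, which lies in every nonempty open set; equivalently, the non-units form a generic point for the closure and the whole maximal ideal is the unique closed point. The consequence I would exploit is that taking global sections of a sheaf $\cF$ of abelian groups on $\MSpec(B)$ agrees with the stalk at this closed point, and stalks are always exact. (Compare Lemma~\ref{Pic.affine}, which uses exactly this ``unique maximal ideal'' phenomenon to kill the Picard group of an affine monoid scheme.) Thus $f_*$ over $U$ is an exact functor, and assembling these local statements shows $f_*$ is exact.

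Given exactness of $f_*$, the cohomology comparison $H^*(Y,\cL)\cong H^*(X,f_*\cL)$ follows by the usual Leray/Grothendieck spectral-sequence argument: the composite of global sections on $X$ with $f_*$ is global sections on $Y$, and because $f_*$ is exact it carries injective (or flasque) resolutions on $Y$ to acyclic resolutions on $X$, so the higher direct images $R^q f_* \cL$ vanish for $q>0$ and the spectral sequence $H^p(X, R^q f_*\cL)\Rightarrow H^{p+q}(Y,\cL)$ degenerates. (Note the statement writes $p_*$ for what is surely $f_*$; I would phrase it with $f_*$.)

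The main obstacle I anticipate is verifying cleanly that global sections over an affine monoid scheme is exact on sheaves of abelian groups. In ordinary algebraic geometry this requires quasi-coherence and Serre's vanishing criterion; here the argument is \emph{simpler} precisely because of the unique closed point, but I would need to be careful about what category of sheaves is meant and to confirm that stalk-at-the-closed-point really does compute $H^0$ on a space whose every nonempty open contains that closed point. Once that topological fact about $\MSpec(B)$ is pinned down, everything else is the formal homological-algebra machinery, which the authors rightly expect to abbreviate.
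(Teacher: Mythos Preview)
Your approach is essentially the paper's: both reduce to the affine case and use that on $\MSpec(B)$ the stalk at the unique closed point coincides with global sections, so that $(f_*\cL)_x=\cL_y$ and stalkwise exactness gives exactness of $f_*$. The paper compresses this into three sentences and leaves the cohomology comparison as an implicit ``in particular''; your spelling out of the Grothendieck spectral sequence degeneration is fine.

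One correction to your topological reasoning: the closed point of $\MSpec(B)$ does \emph{not} lie in every nonempty open set (that would make it generic, not closed). For instance, $D(s)$ for any non-unit $s$ is a nonempty open missing the maximal ideal. The correct statement is the opposite one: the only open set containing the closed point is the whole space. Indeed, any basic open $D(s)$ containing the maximal ideal forces $s$ to be a unit, whence $D(s)=\MSpec(B)$. This is exactly what makes the stalk at the closed point equal to $H^0$, since the directed system of neighborhoods of that point has a single element. Your conclusion is right, but the justification as written is inverted; once you fix this, the argument goes through and matches the paper's.
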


\begin{proof}
Suppose that $0\to\cL'\to\cL\to\cL''\to0$ is an exact sequence
of sheaves on $Y$. Fix an affine open  $U=\MSpec(A)$ of $X$ with 
closed point $x\in X$. Then $f^{-1}(U)=\MSpec(B)$ for some
monoid $B$. If $y\in Y$ is the unique closed point of $\MSpec(B)$
the stalk sequence $0\to\cL'_y\to\cL_y\to\cL''_y\to0$ is exact.
Since this is the stalk sequence at $x$ of 
$0\to f_*\cL'\to f_*\cL\to f_*\cL''\to0$, 
the direct image sequence is exact.
\end{proof}

Here is an application, showing one way in which monoid schemes 
differ from schemes.
Let $T$ denote the free (pointed) monoid on generator $t$, and
let $\bA^1$ denote $\MSpec(T)$. Then $A\wedge T$ is the analogue of 
a polynomial ring over $A$, and $X\times\bA^1$ is the monoid scheme
which is locally $\MSpec(A)\times\bA^1=\MSpec(A\wedge T)$; 
see \cite[3.1]{chww}. Thus $p:X\times\bA^1\to X$ is affine,
and $f_*\cA_Y^\times=\cA_X^\times$.
From Proposition \ref{affine.f_*} we deduce

\begin{cor}\label{Pic.HI}
For every monoid scheme $X$,
$\Pic(X)\cong\Pic(X\times\bA^1)$.
\end{cor}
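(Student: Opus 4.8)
The plan is to reduce the statement to the cohomological description of the Picard group together with the exactness of direct image along the affine projection $p\colon X\times\bA^1\to X$. First I would invoke Lemma \ref{Pic=H1} for both schemes, rewriting the two sides as $\Pic(X)\cong H^1(X,\cA_X^\times)$ and $\Pic(X\times\bA^1)\cong H^1(X\times\bA^1,\cA_Y^\times)$, where I set $Y=X\times\bA^1$. The problem then becomes showing that these two first cohomology groups of abelian-group sheaves agree.

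Next I would apply Proposition \ref{affine.f_*} to $p$, which is affine since locally it is $\MSpec(A)\times\bA^1=\MSpec(A\wedge T)\to\MSpec(A)$. Because $\cA^\times$ is a sheaf of abelian groups, that proposition supplies a canonical isomorphism $H^*(X\times\bA^1,\cA_Y^\times)\cong H^*(X,p_*\cA_Y^\times)$, in particular in degree one. Combined with the previous step, this reduces everything to identifying the pushforward sheaf $p_*\cA_Y^\times$ on $X$.

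The crux is therefore the computation $p_*\cA_Y^\times=\cA_X^\times$, already recorded in the discussion preceding the corollary. To verify it one works over an affine open $U=\MSpec(A)$, where $p^{-1}(U)=\MSpec(A\wedge T)$, and checks that adjoining the polynomial variable $t$ creates no new units, i.e.\ $(A\wedge T)^\times=A^\times$. This is immediate from the grading by powers of $t$: a product $(a\wedge t^m)(b\wedge t^n)=ab\wedge t^{m+n}$ equals $1=1\wedge t^0$ only when $m=n=0$ and $ab=1$, so every unit of $A\wedge T$ already lies in $A^\times$. Hence $p_*\cA_Y^\times=\cA_X^\times$, and the chain of isomorphisms gives $\Pic(X\times\bA^1)\cong H^1(X,\cA_X^\times)\cong\Pic(X)$.

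I do not expect a genuine obstacle, since all the machinery is in place; the only substantive point is the unit computation $(A\wedge T)^\times=A^\times$, which for monoids is far easier than its ring-theoretic analogue precisely because there is no additive mixing across distinct $t$-degrees. I would also note that the resulting isomorphism is induced by pullback $p^*$ along the projection, so the identification is natural in $X$.
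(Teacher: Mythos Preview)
Your proof is correct and follows exactly the paper's approach: use Lemma~\ref{Pic=H1}, note that $p$ is affine so Proposition~\ref{affine.f_*} applies, and identify $p_*\cA_Y^\times=\cA_X^\times$ via the unit computation $(A\wedge T)^\times=A^\times$. You have in fact supplied more detail on that last point than the paper itself does.
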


\newpage
\section{Cartier divisors}\label{sec:Cartier}

Let $(X,\cA)$ be a cancellative monoid scheme. We write $A_0$ for the
stalk of $\cA$ at the generic point of $X$, and 
$\cA_0$ for the associated constant sheaf.
A {\it Cartier divisor} on $X$ is a global section of the sheaf of groups
$\cA_0^\times/\cA^\times$. On each affine open $U$, it is given by an 
$a_U\in A_0^\times$ up to a unit in $\cA(U)^\times$, and we have the 
usual representation as $\{(U,a_U)\}$ with $a_U/a_V$ in 
$\cA(U\cap V)^\times$. We write $\Cart(X)$ for the group of Cartier 
divisors on $X$.
The {\it principal} Cartier divisors, i.e., those represented by 
some $a\in A_0^\times$, form a subgroup of $\Cart(X)$.

\begin{prop}\label{Cart.Pic}
Let $X$ be a cancellative monoid scheme. Then the map $D\mapsto\cL(D)$
defines an isomorphism between the group of Cartier divisors
modulo principal divisors and $\Pic(X)$.
\end{prop}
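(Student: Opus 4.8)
The plan is to realize the asserted isomorphism as the connecting homomorphism of a short exact sequence of sheaves, exploiting the identification $\Pic(X)\cong H^1(X,\cA^\times)$ from Lemma~\ref{Pic=H1}. Since both sides are additive over connected components, I would first reduce to the case where $X$ is connected, hence --- being cancellative --- irreducible with a unique generic point and generic monoid $A_0=\cA_0$. The relevant input is the short exact sequence of sheaves of abelian groups
\[
1 \to \cA^\times \to \cA_0^\times \to \cA_0^\times/\cA^\times \to 1,
\]
in which $\cA_0^\times$ is the constant sheaf with value $A_0^\times$; the inclusion $\cA^\times\hookrightarrow\cA_0^\times$ comes from $A\hookrightarrow A_0$ (legitimate since $\cA$ is cancellative), and $H^0$ of the quotient is $\Cart(X)$ by the very definition of a Cartier divisor.

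Taking cohomology, I would obtain the exact sequence
\[
A_0^\times \to \Cart(X) \to H^1(X,\cA^\times) \to H^1(X,\cA_0^\times),
\]
where $H^0(X,\cA_0^\times)=A_0^\times$ because $\cA_0^\times$ is constant and $X$ is connected. Substituting $H^1(X,\cA^\times)\cong\Pic(X)$ and observing that the image of $A_0^\times$ consists exactly of those divisors representable by a single global $a\in A_0^\times$ --- i.e.\ the principal Cartier divisors --- the sequence becomes $A_0^\times\to\Cart(X)\to\Pic(X)\to H^1(X,\cA_0^\times)$. Hence, provided the right-hand term vanishes, the connecting map induces an isomorphism $\Cart(X)/(\text{principal})\map{\sim}\Pic(X)$. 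I would then identify this connecting homomorphism with $D\mapsto\cL(D)$: a Cartier divisor represented by local data $\{(U_i,a_i)\}$ lifts over each $U_i$ to $a_i\in\cA_0^\times(U_i)$, and the resulting $1$-cocycle $(a_i/a_j)$ in $\cA^\times(U_i\cap U_j)$ is precisely the gluing data of the invertible subsheaf $\cL(D)\subseteq\cA_0$ that equals $a_i^{-1}\cA$ on $U_i$; under the isomorphism of Lemma~\ref{Pic=H1} this cocycle corresponds to $\cL(D)$.

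The main obstacle is the vanishing $H^1(X,\cA_0^\times)=0$. The point is that $\cA_0^\times$ is a constant sheaf while $X$ is irreducible: on an irreducible space every nonempty open set is dense and connected, so the constant sheaf takes the single value $A_0^\times$ on every nonempty open and all of its restriction maps are the identity. Therefore $\cA_0^\times$ is flasque, and flasque sheaves have no higher cohomology. I expect this to be the only step requiring genuine care; the remaining verifications --- exactness of the displayed sequence of sheaves, agreement of the connecting map with $D\mapsto\cL(D)$, and the identification of the image of $A_0^\times$ with the principal divisors --- are formal once the cancellative hypothesis is used to embed $\cA^\times$ into the constant sheaf $\cA_0^\times$.
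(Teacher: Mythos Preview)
Your proposal is correct and follows essentially the same argument as the paper: both use the short exact sequence $1\to\cA^\times\to\cA_0^\times\to\cA_0^\times/\cA^\times\to1$, invoke $\Pic(X)\cong H^1(X,\cA^\times)$, and kill $H^1(X,\cA_0^\times)$ by observing that a constant sheaf on an irreducible space is flasque (the paper cites \cite[III.2.5]{Hart} for this). Your version is more explicit in reducing to the connected case and in identifying the connecting homomorphism with $D\mapsto\cL(D)$, which the paper handles separately in Example~\ref{ex:L(D)}.
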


\begin{proof}
Consider the short exact sequence of sheaves of abelian groups
\[
1 \to\cA^\times \to \cA_0^\times \to \cA_0^\times/\cA^\times \to 1.
\]
Since $\cA_0^\times$ is constant and $X$ is irreducible we have 
$H^1(X,\cA_0^\times)=0$ \cite[III.2.5]{Hart}. By Lemma \ref{Pic=H1},
the cohomology sequence becomes:
\[
0\to\cA(X)^\times \to \cA_0^\times \map{\div} \Cart(X) 
\map{\delta}\Pic(X)\to 0. \qedhere
\]
\end{proof}

\begin{exam}\label{ex:L(D)}
If $D$ is a Cartier divisor on a cancellative monoid scheme $X$,
represented by $\{(U,a_U)\}$, we define a subsheaf $\cL(D)$ of the
constant sheaf $\cA_0$ by letting its restriction to $U$ be generated
by $a_U^{-1}$. This is well defined because $a_U^{-1}$ and $a_V^{-1}$
generate the same subsheaf on $U\cap V$. The usual argument
\cite[II.6.13]{Hart} shows that $D\mapsto\cL(D)$ defines an isomorphism
from $\Cart(X)$ to the group of invertible subsheaves of $A_0^\times$.
By inspection, the map $\delta$ in \ref{Cart.Pic} sends $D$ to $\cL(D)$.
\end{exam}

\begin{lem}
If $X$ is a normal monoid scheme of finite type, 
$\Pic(X)$ is a subgroup of $\Cl(X)$.
\end{lem}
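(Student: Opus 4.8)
The plan is to produce an injective group homomorphism from $\Pic(X)$ into $\Cl(X)$ by composing the isomorphism of Proposition \ref{Cart.Pic} with a natural map from Cartier divisors to Weil divisors. First I would note that a normal monoid scheme is in particular cancellative, so the machinery of Section \ref{sec:Cartier} applies and Proposition \ref{Cart.Pic} identifies $\Pic(X)$ with the group $\Cart(X)$ of Cartier divisors modulo principal Cartier divisors. A Cartier divisor is given by local data $\{(U,a_U)\}$ with $a_U\in A_0^\times$ and $a_U/a_V\in\cA(U\cap V)^\times$; I would send it to the Weil divisor obtained by taking $\div(a_U)$ on each piece. The compatibility condition $a_U/a_V\in\cA(U\cap V)^\times$ guarantees that $v_x(a_U)=v_x(a_V)$ whenever the height one point $x$ lies in $U\cap V$ (a unit has valuation zero at every height one prime, by the description of the DV monoid $\cA_x$ in Corollary \ref{DVmonoid}), so these local divisors glue to a well-defined global Weil divisor in $\Div(X)$.

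Next I would check that this assignment is a homomorphism and descends to a map $\Pic(X)\to\Cl(X)$: it is additive because $\div$ is additive (Lemma \ref{Cl.sequence}), and a principal Cartier divisor coming from a global $a\in A_0^\times$ maps to the principal Weil divisor $\div(a)$, which is zero in $\Cl(X)$. Thus the Cartier-to-Weil map carries principal divisors to principal divisors and induces a well-defined homomorphism on the quotient groups.

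The crux is injectivity. Here I would use the exact sequences assembled earlier. By Lemma \ref{Cl.sequence} the kernel of $A_0^\times\map{\div}\Div(X)$ is exactly $\cA(X)^\times$, and by Proposition \ref{Cart.Pic} the kernel of $A_0^\times\map{\div}\Cart(X)$ is also $\cA(X)^\times$; moreover both $\Pic(X)$ and $\Cl(X)$ are the respective cokernels of $\div$. Suppose a Cartier divisor $D=\{(U,a_U)\}$ maps to a principal Weil divisor $\div(a)$ for some global $a\in A_0^\times$. Subtracting the principal Cartier divisor of $a$, I reduce to the case where the associated Weil divisor is trivial, i.e.\ $v_x(a_U)=0$ at every height one point $x$ in each $U$. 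The main obstacle is then to deduce that $D$ is itself principal. On each affine $U=\MSpec(A)$ the element $a_U\in A_0^\times$ has trivial divisor on $U$, so $a_U\in\cA(U)^\times$ by Theorem \ref{intersect.Ap} (normality, via $A=\bigcap_{\operatorname{ht}\frakp=1}A_\frakp$, forces an element of $A_0^\times$ with no zeros or poles to be a unit); hence $D$ is locally principal by a unit and therefore trivial as a Cartier divisor. This shows the induced map $\Pic(X)\to\Cl(X)$ has trivial kernel, completing the embedding. I expect the gluing/well-definedness at points lying in several opens and this last normality step to be the only genuinely delicate points; everything else is formal diagram-chasing with the two exact sequences.
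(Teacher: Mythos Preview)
Your proposal is correct and follows essentially the same route as the paper: define the Cartier-to-Weil map $D=\{(U,a_U)\}\mapsto$ (the Weil divisor that locally equals $\div(a_U)$), observe that principal Cartier divisors go to principal Weil divisors, and conclude $\Pic(X)\hookrightarrow\Cl(X)$. The only difference is that you are considerably more explicit about the injectivity step (using Theorem~\ref{intersect.Ap} to see that a Cartier divisor with trivial associated Weil divisor has each $a_U\in\cA(U)^\times$), which the paper compresses into the single phrase ``it is easy to see that this makes the Cartier divisors into a subgroup.''
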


\begin{proof}
Every Cartier divisor $D=\{(U,a_U)\}$ determines a Weil divisor; 
the restriction of $D$ to $U$ is the divisor of $a_U$. 
It is easy to see that this makes the Cartier divisors into a 
subgroup of the Weil divisor class group $D(X)$, under which 
principal Cartier divisors are identified with principal 
Weil divisors. This proves the result.
\end{proof}

\begin{thm}
Let $X$ be a separated connected monoid scheme.
If $X$ is locally factorial then every Weil divisor is a Cartier
divisor, and $\Pic(X)=\Cl(X)$.
\end{thm}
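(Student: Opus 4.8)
The plan is to mirror the classical argument from algebraic geometry (Hartshorne II.6.11), adapting it to monoid schemes. The statement has two parts: first, that on a locally factorial scheme every Weil divisor is Cartier, and second, that the resulting inclusion $\Cart(X)\to\Div(X)$ induces an isomorphism $\Pic(X)\cong\Cl(X)$. Since the previous lemma already shows $\Pic(X)\hookrightarrow\Cl(X)$ via the map sending a Cartier divisor to its associated Weil divisor, the essential content is surjectivity, i.e., that every Weil divisor comes from a Cartier divisor.

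First I would establish the key local fact: if $A$ is a factorial monoid, then every height one prime $\frakp$ of $A$ is principal, say $\frakp=\pi A$ for a prime element $\pi$, and more generally every Weil divisor on $\MSpec(A)$ is principal. Indeed, in a factorial monoid each element factors uniquely into primes, so the group $A_0^\times/A^\times$ maps isomorphically onto $\Div(\MSpec(A))$ via $\div$; equivalently $\Cl(\MSpec(A))=0$. This is the monoid analogue of the statement that a UFD has trivial class group, and it should follow directly from unique factorization together with Corollary~\ref{DVmonoid} identifying height one localizations with DV monoids (so that $v_\frakp(\pi)=1$ for the prime generating $\frakp$).

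With this in hand, I would argue globally. Let $D=\sum_x n_x x$ be a Weil divisor on $X$. Cover $X$ by affine opens $U_i=\MSpec(A_i)$ with each stalk factorial (using local factoriality). On each $U_i$ the restriction $D|_{U_i}$ is a principal divisor $\div(a_i)$ for some $a_i\in A_0^\times$, by the local computation above. On overlaps $U_i\cap U_j$ we have $\div(a_i/a_j)=0$, so by Lemma~\ref{Cl.sequence} (applied on the normal scheme $U_i\cap U_j$, whose global units are the relevant ones) the quotient $a_i/a_j$ is a unit in $\cA(U_i\cap U_j)^\times$. Hence the data $\{(U_i,a_i^{-1})\}$ satisfies the cocycle condition and defines a Cartier divisor whose associated Weil divisor is $D$. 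This shows the map $\Cart(X)\to\Div(X)$ is surjective, and since it carries principal Cartier divisors to principal Weil divisors and is injective modulo principals by the earlier lemma, it descends to the desired isomorphism $\Pic(X)=\Cl(X)$.

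The main obstacle I anticipate is the local factoriality step: I must confirm that in a factorial monoid the class group vanishes and that the height one primes are exactly the principal primes, taking care that ``factorial'' for pointed monoids behaves as expected with the basepoint and that $v_\frakp$ reads off the exponent of the corresponding prime element correctly. The gluing and cocycle verification is routine once this local input is secured, and the compatibility of $\div$ on Cartier with the Weil divisor of $a_U$ is exactly the content recorded in Example~\ref{ex:L(D)} and the preceding lemma. I would also note that separatedness is used precisely to ensure (via \cite[8.9]{chww}) that a height one point is determined by its valuation, so that the locally defined $a_i$ genuinely recover the prescribed coefficients $n_x$.
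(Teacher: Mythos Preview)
Your proposal is correct and follows essentially the same route as the paper: both argue that factorial monoids are normal so $\Pic(X)\subseteq\Cl(X)$ by the preceding lemma, then on each affine open $U=\MSpec(A)$ use principality of height-one primes to write $D|_U=\div(a_U)$ with $a_U=\prod p_i^{n_i}$, obtaining a Cartier representative $\{(U,a_U)\}$ of $D$. The paper is terser (it leaves the cocycle check implicit), and your side remark on separatedness is slightly off---its role here is to make overlaps $U_i\cap U_j$ affine so that Lemma~\ref{Cl.sequence} applies to force $a_i/a_j\in\cA(U_i\cap U_j)^\times$, not to pin down points by valuations---but this does not affect the argument.
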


\begin{proof}
By Example \ref{ex:normal}, $X$ is normal since factorial monoids are normal.
Thus $\Pic(X)$ is a subgroup of $\Cl(X)$, and it suffices to show
that every Weil divisor $D=\sum n_i x_i$ is a Cartier divisor.
For each affine open $U$, and each point $x_i$ in $U$,
let $p_i$ be the generator of the prime ideals associated to $x_i$; 
then the divisor of $a_U=\prod p_i^{n_i}$ is the restriction of $D$ 
to $U$, and $D=\{(U,a_U)\}$.
\end{proof}

\begin{lem}\label{Pic.Pn}
For the projective space monoid scheme $\P^n$ we have 
$$\Pic(\P^n)=\Cl(\P^n)=\Z.$$
\end{lem}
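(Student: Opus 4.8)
The plan is to prove the two equalities in opposite order, first reducing the Picard group to the class group and then computing the class group directly. The projective monoid scheme $\P^n$ is obtained by gluing $n+1$ copies of $\bA^n = \MSpec(T_n)$, where $T_n$ is the free pointed monoid on $n$ generators; it is separated and connected, and each affine chart is $\MSpec$ of a free, hence factorial, monoid. Thus $\P^n$ is locally factorial, and the preceding theorem applies to give $\Pic(\P^n) = \Cl(\P^n)$. It therefore suffices to show $\Cl(\P^n) = \Z$.

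To compute $\Cl(\P^n)$ I would apply the exact sequence of Lemma \ref{Cl.sequence}. The group of units $A_0^\times$ of the generic monoid is the lattice $\Z^n$ of degree-zero Laurent monomials in homogeneous coordinates $x_0,\ldots,x_n$; it is generated by the $u_j = x_j/x_0$. The height one points of $\P^n$ are the $n+1$ torus-invariant divisors $D_0,\ldots,D_n$ (the $n+1$ edges of the fan of $\P^n$), so $\Div(\P^n) = \Z^{n+1}$. Working in the chart $U_0 = \MSpec(T_n)$ with coordinates $u_1,\ldots,u_n$, one has $v_{D_j}(u_k) = \delta_{jk}$ for $1 \le j,k \le n$, while $v_{D_0}(u_j) = -1$ since $x_0$ is a uniformizer along $D_0$. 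Hence $\div(u_j) = D_j - D_0$, so the image of $\div$ is the rank-$n$ sublattice $\{\sum a_i D_i : \sum a_i = 0\}$, and the degree map $\sum a_i D_i \mapsto \sum a_i$ identifies the cokernel with $\Z$. Since $\div$ is visibly injective, the global units $\cA(\P^n)^\times$ are trivial, and Lemma \ref{Cl.sequence} yields $\Cl(\P^n) = \Z$.

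Alternatively, $\Cl(\P^n) = \Z$ follows immediately from Example \ref{normal.v.toric}, which identifies the Weil class group of a toric monoid scheme with that of the associated toric variety; for $\P^n$ this is the classical answer $\Z$. The only point that requires genuine care is the bookkeeping of the valuations $v_{D_i}$ on the generic group, namely confirming that the $D_i$ are exactly the $n+1$ height one points and that $\div(u_j) = D_j - D_0$. Everything else is formal: the reduction $\Pic = \Cl$ is a citation of the preceding theorem once local factoriality is noted, and the extraction of $\Z$ as the cokernel is linear algebra.
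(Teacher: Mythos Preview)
Your proof is correct and follows essentially the same approach as the paper: first invoke local factoriality to get $\Pic(\P^n)=\Cl(\P^n)$, then identify $A_0^\times$ with the free abelian group on the $u_j=x_j/x_0$, $\Div(\P^n)$ with $\Z^{n+1}$ on the $D_i$, and compute $\div(u_j)=D_j-D_0$ to read off the cokernel $\Z$. Your version is a bit more explicit about the valuations and adds the alternative appeal to Example~\ref{normal.v.toric}, but the substance is identical.
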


\begin{subrem}
This calculation of $\Pic(\P^n)$ formed the starting point of our 
investigation. We learned it from Vezzani (personal communication), 
but it is also found in \cite{CLS} and \cite{GHS}. 
Related calculations are in \cite{Hutt} and \cite{Sz}.
\end{subrem}

\begin{proof}
Since $\P^n$ is locally factorial, $\Pic(\P^n)=\Cl(\P^n)$.
By definition, $\P^n$ is $\MProj$ of the free abelian monoid on
$\{ x_0,...,x_n\}$, and $A_0$ is the free abelian group with the
$x_i/x_0$ as basis ($i=1,...,n$). On the other hand, $\Div(\P^n)$
is the free abelian group on the generic points $[x_i]$ of the $V(x_i)$.
Since $\div(x_i/x_0)=[x_i]-[x_0]$, the result follows.
\end{proof}

Let $\Delta$ be a fan, $X$ the toric monoid scheme associated to
$\Delta$ by \cite[4.2]{chww}, and $X_k$ the usual toric variety 
associated to $\Delta$ over some field $k$. ($X_k$ is the 
$k$-realization $X_k$ of $X$.) As pointed out in 
Example \ref{normal.v.toric}, our Weil divisors correspond to the 
$T$-Weil divisors of the toric variety $X_k$ and $\Cl(X)\cong\Cl(X_k)$.
Moreover, our Cartier divisors on $X$ correspond to the
$T$-Cartier divisors of \cite[3.3]{F}).
Given this dictionary, the following result is established by Fulton 
in \cite[3.4]{F}.

\begin{thm}\label{thm:toric}
Let $X$ and $X_k$ denote the toric monoid scheme and toric variety (over $k$)
associated to a given fan. Then
$\Pic(X) \cong \Pic(X_k).$

Moreover, $\Pic(X)$ is free abelian if
$\Delta$ contains a cone of maximal dimension.
\end{thm}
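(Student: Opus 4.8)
The plan is to compute $\Pic(X)$ via Cartier divisors and then match the computation term-by-term with Fulton's description of $\Pic(X_k)$. First I would observe that the toric monoid scheme $X$ is cancellative (indeed separated and normal), so Proposition \ref{Cart.Pic} applies and gives $\Pic(X)\cong\Cart(X)/(\text{principal Cartier divisors})$. Thus it suffices to identify $\Cart(X)$, together with its subgroup of principal divisors, with the group of $T$-Cartier divisors of \cite[3.3]{F} modulo principal $T$-Cartier divisors; Fulton's formula \cite[3.4]{F} then computes the latter quotient as $\Pic(X_k)$.

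To make the dictionary precise, recall that a cone $\sigma$ of $\Delta$ corresponds to the affine open $U_\sigma=\MSpec(A_\sigma)$, where $A_\sigma$ is the pointed monoid $(\sigma^\vee\cap M)_*$ and $M=A_0^\times$ is the character lattice. The group of units of $A_\sigma$ is exactly $M\cap\sigma^\perp$. By definition a Cartier divisor on $X$ is a global section of $\cA_0^\times/\cA^\times$, hence is given on each $U_\sigma$ by an element $a_\sigma\in A_0^\times=M$, well-defined modulo $M\cap\sigma^\perp$, with $a_\sigma/a_\tau\in\cA(U_\sigma\cap U_\tau)^\times$ on overlaps. This is precisely the data $\{m_\sigma\}$ of a $T$-Cartier divisor, and the principal Cartier divisors (those coming from a global $a\in M$) match the principal $T$-Cartier divisors. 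I would spell out this identification and its compatibility with the group structure, which yields $\Pic(X)\cong\Pic(X_k)$.

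For the freeness statement, I would argue directly on the monoid side rather than appealing to Fulton. The fan being finite, sending a Cartier divisor to its local data embeds $\Cart(X)$ into the finite product $\prod_\sigma M/(M\cap\sigma^\perp)$; since each $M\cap\sigma^\perp$ is a saturated, hence direct, summand of $M$, each factor is free abelian, so $\Cart(X)$ is a free abelian group. If $\Delta$ contains a cone $\sigma_0$ of maximal dimension, then $\sigma_0^\perp=0$, so the $\sigma_0$-component of a principal divisor coming from $a\in M$ is $a$ itself; thus the map $M\to\Cart(X)$ is split injective, its image is a direct summand, and $\Pic(X)=\Cart(X)/M$ is again free abelian.

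The main obstacle is the middle step: verifying that the sheaf-theoretic Cartier divisors of the monoid scheme coincide exactly with Fulton's combinatorial $T$-Cartier divisors, including that the compatibility condition $a_\sigma/a_\tau\in\cA(U_\sigma\cap U_\tau)^\times$ translates into Fulton's requirement that $m_\sigma$ and $m_\tau$ agree on $\sigma\cap\tau$, and that principal divisors correspond under this matching. Once this dictionary is in hand, both $\Pic(X)\cong\Pic(X_k)$ and the freeness fall out, the latter most cleanly from the splitting argument above.
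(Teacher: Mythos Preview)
Your approach to the isomorphism $\Pic(X)\cong\Pic(X_k)$ is essentially the paper's: set up the dictionary identifying Cartier divisors on the monoid scheme with Fulton's $T$-Cartier divisors (and principal with principal), then invoke \cite[3.4]{F}. You simply spell out the dictionary more explicitly than the paper does, which is fine.

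Where you genuinely diverge is on the freeness assertion. The paper just points to \cite[3.4]{F} for this too, whereas you give a self-contained argument on the monoid side: embed $\Cart(X)$ in the finite product $\prod_\sigma M/(M\cap\sigma^\perp)$ to see it is free abelian, and then use a maximal-dimensional cone $\sigma_0$ (where $\sigma_0^\perp=0$) to split the inclusion $M\hookrightarrow\Cart(X)$, so that $\Pic(X)=\Cart(X)/M$ is a direct summand of a free abelian group. This is correct and has the advantage of not depending on the toric-variety side at all; in exchange, it duplicates an argument that Fulton already supplies. Either route works.
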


\section{$\Pic$ of pc monoid schemes}

In this section, we derive some results about the Picard group of
pc monoid schemes.
When $X$ is a pc monoid scheme, we can form the reduced monoid scheme 
$X_\red=(X,\cA_\red)$
using Remark \ref{nil(A)}: the stalk of $\cA_\red$ at $x$ 
is $\cA_x/\nil(\cA_x)$.  Since $\cA^\times=\cA_\red^\times$, 
the map $X_\red\to X$ induces an isomorphism $\Pic(X)\cong\Pic(X_\red)$.

We will use the constructions of normalization and seminormalization
given in Section \ref{sec:normalize}.

\begin{prop}\label{Pic.Xsn}
If $X$ is a pc monoid scheme, the canonical map $X_\sn\to X$
induces an isomorphism $\Pic(X)\cong\Pic(X_\sn)$.
\end{prop}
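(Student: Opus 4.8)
The plan is to reduce the statement to a cohomological comparison via Lemma~\ref{Pic=H1}, then exploit the fact (Lemma~\ref{sn.homeo}) that $X_\sn\to X$ is a homeomorphism of underlying topological spaces. Since $\Pic(X)\cong H^1(X,\cA^\times)$ and $\Pic(X_\sn)\cong H^1(X_\sn,\cA_\sn^\times)$, and since the two spaces are identified, it suffices to show that the canonical map $\cA^\times\to\cA_\sn^\times$ of sheaves of abelian groups on the common space is an isomorphism. This would immediately give the isomorphism on $H^1$, hence on $\Pic$.

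First I would reduce to the reduced case. The opening paragraph of this section already records that $\Pic(X)\cong\Pic(X_\red)$ because $\cA^\times=\cA_\red^\times$, and by definition the seminormalization map factors as $X_\sn\to X_\red\to X$ with an injection $A_\red\to A_\sn$ on affine pieces. So I may assume $X$ is reduced and work with the injection $\cA_\red\to\cA_\sn$, which I will simply write $\cA\to\cA_\sn$. The task becomes purely local on stalks: I must check that for each point $x$, the induced map on unit groups $\cA_x^\times\to(\cA_\sn)_x^\times$ is an isomorphism. Because seminormalization commutes with localization (cited from \cite[1.13]{chww-p} in Section~\ref{sec:normalize}), the stalk of $\cA_\sn$ at $x$ is the seminormalization of the stalk $\cA_x$, so I am reduced to a statement about a single reduced pc monoid $B=\cA_x$ and its seminormalization $B_\sn$: namely that $B^\times\to B_\sn^\times$ is an isomorphism.

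The key computation, which I expect to be the only real content, is that seminormalization does not change the units. Injectivity of $B^\times\to B_\sn^\times$ is clear since $B\hookrightarrow B_\sn$. For surjectivity, let $b\in B_\sn^\times$ with inverse $b^{-1}\in B_\sn^\times$. By the defining property of $B_\sn$ recalled in Section~\ref{sec:normalize}, every element of $B_\sn$ has a power lying in $B$; choose $n\gg0$ so that both $b^n$ and $(b^{-1})^n=(b^n)^{-1}$ lie in $B$. Then $b^n\in B$ is a unit of $B$ (its inverse is also in $B$), so $b^n\in B^\times$. It remains to deduce $b\in B^\times$ from $b^n\in B^\times$; this is where I would argue that a seminormal (in particular reduced) monoid has $B^\times$ integrally closed in $B_\sn^\times$, or more directly that if some power of $b$ is a unit then $b$ itself lies in $B^\times$. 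The cleanest route uses that $B_\sn^\times$ and $B^\times$ are both subgroups of the group completion and that $b$ lies in $B^\times$ as soon as $b^n$ does, because the value-group/divisibility structure forces no new torsion in units after a seminormalization between $B$ and $B_\nor$.

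The main obstacle is precisely this last step---verifying that $b^n\in B^\times$ implies $b\in B^\times$ in the seminormal monoid $B_\sn$. I would handle it by invoking the seminormality criterion (\ref{nil(A)} and the definition in Section~\ref{sec:normalize}): given $b\in B_\sn$ with $b^n\in B^\times\subseteq B$, one uses the relation $b^3=c^2$ type conditions or, more efficiently, the fact that $B_\sn\subseteq B_\nor$ together with the observation that $b$ is a unit in $B_\nor$ (since $b^n$ is) and a unit of a normal monoid lying in a seminormal submonoid between $B$ and $B_\nor$ must already be a unit of $B$. Once $B^\times\xrightarrow{\ \sim\ }B_\sn^\times$ is established at every stalk, the sheaf map $\cA^\times\to\cA_\sn^\times$ is an isomorphism on the common space $X_\sn\cong X$, so $H^1$ agrees, and Lemma~\ref{Pic=H1} yields $\Pic(X)\cong\Pic(X_\sn)$.
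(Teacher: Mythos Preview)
Your overall strategy is exactly the paper's: reduce to the reduced case, use Lemma~\ref{Pic=H1} together with the homeomorphism of Lemma~\ref{sn.homeo} to reduce to showing that $\cA^\times\to\cA_\sn^\times$ is an isomorphism of sheaves, then check this stalkwise, i.e., show $A^\times\to A_\sn^\times$ is an isomorphism for a reduced pc monoid $A$.

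The gap is precisely where you flag it. You correctly reach the point where $b\in A_\sn^\times$ has $b^n\in A^\times$ for some large $n$, but your proposed ways of concluding $b\in A^\times$ (integral closedness of units, or routing through $A_\nor$) are vague and not obviously valid in this generality. The paper's argument is a one-line trick you are overlooking: the defining property of the seminormalization (Section~\ref{sec:normalize}) is that $c^m\in A$ for \emph{all} $m\gg0$, not merely for one $m$. Applying this to both $b$ and $b^{-1}$, one may choose $n$ large enough that $b^{n+1}$, $b^{-n}$, $b^n$, and $b^{-(n+1)}$ all lie in $A$. Then
\[
b = b^{n+1}\cdot b^{-n}\in A
\quad\text{and}\quad
b^{-1} = b^{n}\cdot b^{-(n+1)}\in A,
\]
so $b\in A^\times$. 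Once you replace your last paragraph with this computation, the proof is complete and identical to the paper's.
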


\begin{proof}
Since $X_\red$ and $X$ have the same underlying space, it suffices 
by Lemma \ref{Pic=H1} to assume that $X$ is reduced and show that 
the inclusion $\cA^\times\to\cA_\sn^\times$ is an isomorphism.
It suffices to work stalkwise, so we are reduced to showing that
if $A$ is reduced then $A^\times\to A_\sn^\times$ is an isomorphism.
If $b\in A_\sn^\times$ then both $b^n$ and $(1/b)^n$ are in $A$
for large $n$, and hence both $b=b^{n+1}b^{-n}$ and 
$b^{-1}=b^n(1/b)^{1+n}$ are in $A$, so $b\in A^\times$.
\end{proof}

\begin{lem}\label{Pic.Xnor}
Let $X$ be a cancellative seminormal monoid scheme and
$p:X_\nor\to X$ its normalization. If $\cH$ denotes the sheaf
$p_*(\cA_\nor^\times)/\cA^\times$ on $X$,
there is an exact sequence
\[
1\to\cA(X)^\times\to\cA_\nor(X_\nor)^\times \to H^0(X,\cH) \to
\Pic(X) \map{p^*} \Pic(X_\nor) \to H^1(X,\cH).
\]
\end{lem}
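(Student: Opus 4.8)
The plan is to extract the exact sequence from the short exact sequence of sheaves of abelian groups on $X$
\[
1\to\cA^\times\to p_*(\cA_\nor^\times)\to\cH\to 1,
\]
whose exactness is the first thing I would establish. The quotient sheaf $\cH$ is \emph{defined} to be $p_*(\cA_\nor^\times)/\cA^\times$, so the only real content is that $\cA^\times\to p_*(\cA_\nor^\times)$ is a monomorphism of sheaves; since $X$ is cancellative and $p$ is dominant on each component, the map $\cA\to p_*\cA_\nor$ is injective on stalks (both sit inside the constant sheaf $\cA_0$ at the generic point), so the same holds after passing to units. Here I would invoke that $p$ is an \emph{affine} morphism: the normalization of an affine $\MSpec(A)$ is $\MSpec$ of a product of monoids by the Lemma preceding Example \ref{norm.axes}, so $p^{-1}(U)$ is affine for affine $U$, and Proposition \ref{affine.f_*} applies.

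Next I would write down the long exact cohomology sequence of this short exact sequence,
\[
1\to H^0(X,\cA^\times)\to H^0(X,p_*\cA_\nor^\times)\to H^0(X,\cH)\to H^1(X,\cA^\times)\to H^1(X,p_*\cA_\nor^\times)\to H^1(X,\cH),
\]
and then translate the cohomology groups into the stated terms. The outer identifications are routine: $H^0(X,\cA^\times)=\cA(X)^\times$ and $H^0(X,p_*\cA_\nor^\times)=\cA_\nor(X_\nor)^\times$ by the definition of direct image on global sections. By Lemma \ref{Pic=H1}, $H^1(X,\cA^\times)\cong\Pic(X)$. The step I expect to carry the real weight is the identification $H^1(X,p_*\cA_\nor^\times)\cong\Pic(X_\nor)$: this is exactly where Proposition \ref{affine.f_*} is needed, giving $H^1(X,p_*\cA_\nor^\times)\cong H^1(X_\nor,\cA_\nor^\times)$, and then Lemma \ref{Pic=H1} again identifies the latter with $\Pic(X_\nor)$. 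I would also check that the resulting map $\Pic(X)\to\Pic(X_\nor)$ is indeed the pullback $p^*$, which follows from naturality of the isomorphism in Lemma \ref{Pic=H1} with respect to the map of structure sheaves $p^{-1}\cA^\times\to\cA_\nor^\times$ adjoint to $\cA^\times\to p_*\cA_\nor^\times$.

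The main obstacle is verifying the hypotheses of Proposition \ref{affine.f_*} for the normalization map, namely that $p:X_\nor\to X$ is affine. Since $X$ is cancellative and seminormal, each affine open $\MSpec(A)$ has $A$ a cancellative monoid with a single minimal prime $(0)$, so its normalization is the single affine scheme $\MSpec(A_\nor)$ rather than a disjoint union; thus $p^{-1}(\MSpec(A))=\MSpec(A_\nor)$ is affine, and Proposition \ref{affine.f_*} applies to give the exactness of $p_*$ and the cohomology comparison. With these identifications in hand, the six-term exact sequence above is precisely the asserted one, and the proof concludes.
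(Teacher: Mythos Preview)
Your proposal is correct and follows essentially the same approach as the paper's proof: set up the short exact sequence $1\to\cA^\times\to p_*(\cA_\nor^\times)\to\cH\to1$ of sheaves on $X$, take the long exact cohomology sequence, and use Proposition~\ref{affine.f_*} (applied to the affine map $p$) together with Lemma~\ref{Pic=H1} to identify the terms. The paper's version is terser---it cites Lemma~\ref{local.normal} for the stalkwise injectivity of $\cA\hookrightarrow p_*\cA_\nor$ rather than your argument via the constant sheaf $\cA_0$---but the logic is the same.
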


\begin{proof}
At each point $x\in X$, the stalk $A=\cA_x$ is a submonoid of its 
normalization $A_\nor=p_*(\cA_\nor)_x$ (by Lemma \ref{local.normal})
and we have an exact sequence of sheaves on $X$:
\[ 
1 \to \cA^\times \to p_*(A_\nor^\times) \to \cH\to 1. 
\]
Since $p$ is affine, Proposition \ref{affine.f_*} implies that
$\cA_\nor(X_\nor)^\times=H^0(X,p_*\cA_\nor^\times)$ and
$\Pic(X_\nor)=H^1(X,p_*A_\nor^\times)$, and the associated
cohomology sequence is the displayed sequence.
\end{proof}

Here are two examples showing that $\Pic(X)\to\Pic(X_\nor)$ need not
be an isomorphism when $X$ is seminormal and cancellative. 

\begin{exam}
Let $A_+$ (resp., $A_-$) be the submonoid of the free monoid
$B=\langle x,y\rangle$ generated by $\{ x,y^2,xy\}$ 
(resp., $\{ x,y^{-2},xy^{-1}\}$). These are seminormal but not normal.
If $X$ is the monoid scheme obtained by gluing the $U_\pm=\MSpec(A_\pm)$
together along $\MSpec(\langle x,y^2,y^{-2}\rangle)$ then it is easy
to see that $\Pic(X)=\Z$, with a generator represented by 
$(U_+,y^2)$ and $(U_-,1)$.  The normalization $X_\nor$ is the toric
monoid scheme $\bA^1\times\P^1$, and $\Pic(X_\nor)\cong\Z$,
with a generator represented by $(U_+,y)$ and $(U_-,1)$. 
Thus $\Pic(X)\to\Pic(X_\nor)$ is an injection with cokernel $\Z/2$.
\end{exam}

\begin{exam}
Let $U$ be an abelian group and $A_x$ the submonoid of 
$B=U_*\wedge\langle x\rangle$ consisting of $0,1$ and all terms
$ux^n$ with $u\in U$ and $n>0$. Then $A_x$ is seminormal and $B$ is its
normalization. Let $X$ be obtained by gluing $\MSpec(A_x)$ and
$\MSpec(A_{1/x})$ together along their common generic point,
$\MSpec(U_*\wedge\langle x,1/x\rangle)$. The normalization of $X$
is $X_\nor=\MSpec(U_*)\times\P^1$, and $\Pic(X_\nor)=\Z$ by
Example \ref{normal.v.toric} and Lemma \ref{Pic.Pn}.
Because $p_*(\cA_\nor^\times)/\cA^\times$ is a skyscraper sheaf 
with stalk $U$ at the two closed points, we see from Lemma \ref{Pic.Xnor}
that $\Pic(X)=\Z\times U$. Thus $\Pic(X)\to\Pic(X_\nor)$
is a surjection with kernel $U$.
\end{exam}

Finally, we consider the case when $X$ is reduced pc monoid scheme
which is not cancellative. We may suppose that $X$ is of finite type,
so that the stalk at a closed point is an affine open $\MSpec(A)$
with minimal points $\frakp_1,...,\frakp_r$, $r>1$.
Then the closure $X'$ of $\frakp_1$ is a cancellative seminormal
monoid scheme. Let $X''$ denote the closure of the remaining minimal
points of $X$, and set $X'''=X'\cap X''$.
Then we have the exact Mayer-Vietoris sequence of sheaves on $X$:
\[
0 \to \cA_X \to p'_*\cA_{X'}\times p''_*\cA_{X''}
\to p'''_*\cA_{X'''} \to 1.
\]
Because the immersions are affine, Proposition \ref{affine.f_*}
yields the exact sequence
\begin{equation}\label{eq:MV}
1\to\cA(X)^\times \to \cA_{X'}^\times \times \cA_{X''}^\times
\to \cA_{X'''}^\times \to 
\Pic(X)\to \Pic(X')\times\Pic(X'')\to \Pic(X''').
\end{equation}
The Picard group may then be determined by induction on $r$ and $\dim(X)$.

\begin{exam}
If $X$ is obtained by gluing together $X_1,..., X_n$ at a common
generic point, then \eqref{eq:MV} yields $\Pic(X)=\oplus\Pic(X_i)$.
\end{exam}



\begin{thebibliography}{10} 

\bibitem{AM}
M. Atiyah and I. Macdonald,
{\em Introduction to commutative algebra},
Addison-Wesley, 1969.

\bibitem{CLS}
C. Chu, O. Lorscheid, and R. Santhanam,
{\it Sheaves and K-theory for $\mathbb{F}_1$-schemes},
Adv. Math. 229 (2012), 2239-2286


\bibitem{chww}
G. Corti\~nas, C. Haesemeyer, M. Walker and C. Weibel,
\newblock{\em Toric Varieties, Monoid Schemes and $cdh$ descent},
J. reine angew. Math. 678 (2013), to appear.
arXiv:$1106.1389$.

\bibitem{chww-p}
G. Corti\~nas, C. Haesemeyer, M. Walker and C. Weibel,
{\em The $K$-theory of toric varieties in positive characteristic},
J. Top., to appear.
arXiv:1207.2891

\bibitem{F}
W. Fulton,
\newblock{\em Introduction to Toric varieties},
Annals of Math. Studies 13, Princeton Univ. Press, 1993.

\bibitem{Gilmer}
R. Gilmer,
\newblock{\em Commutative Semigroup Rings},
Univ. Chicago Press, 1984.

\bibitem{GHS}
H.-C. Graf von Bothmer, L. Hinsch, and U. Stuhler,
{\it Vector bundles over projective spaces. The case $\mathbb{F}_1$},
Arch. Math. (Basel), 96 (2011), 227-234.


\bibitem{H-K}
F. Halter-Koch,
\newblock{\em Ideal systems},
\newblock Monographs in Pure Appl. Math. 211,
Marcel Dekker, 1998.

\bibitem{Hart}
R. Hartshorne,
\newblock{\em Algebraic Geometry},
\newblock Springer Verlag, 1977.

\bibitem{Hutt}
T. Huttemann,
{\it Algebraic $K$-theory of non-linear projective spaces},
J. Pure Appl. Alg. 170 (2002), 185--242.

\bibitem{Kobsa}
U. Kobsa,
{\em Kommutative Algebra f\"ur Monoide},
Diplomarbeit, Univ. Regensburg, 1996. Available at
http://www.kobsa.me.uk/Articles/Thesis.htm

\bibitem{Sz}
M. Szczesny,
{\it On the Hall algebra of coherent sheaves on $\P^1$ over $\mathbb F_1$},
J. Pure Applied Alg. 216 (2011), 662--672.	

\bibitem{Vezz}
A. Vezzani,
{\it Deitmar's versus To\"en-Vaqui\'e's schemes over $\mathbb F_1$},
Math. Z. 271 (2012), 911--926.

\bibitem{WH}
C. Weibel, 
\emph{An introduction to homological algebra}, 
Cambridge University Press, 1994.

\bibitem{WK}
C. Weibel, 
\emph{The $K$-book, An Introduction to Algebraic $K$-theory}, 
Grad. Studies in Math. 145, AMS, 2013.

\end{thebibliography}
\end{document}